\documentclass[preprint,12pt]{elsarticle}

\usepackage[T1]{fontenc}
\usepackage[utf8]{inputenc}
\usepackage{lmodern,microtype}
\usepackage{amsmath,amssymb,amsthm,mathtools}
\usepackage{enumitem}
\usepackage{xcolor}
\usepackage{hyperref}
\hypersetup{colorlinks=true,linkcolor=blue!45!black,citecolor=blue!45!black,
urlcolor=blue!45!black,
pdftitle={Strict lifts and cohomological obstructions in symplectic Lie--Rinehart--Jacobi prequantization},
pdfauthor={Basile Guy Richard Bossoto, Servais Cyr Gatsé and Olivier Mabiala Mikanou}}
\journal{Journal of Geometry and Physics}

\newtheorem{theorem}{Theorem}[section]
\newtheorem{proposition}[theorem]{Proposition}
\newtheorem{lemma}[theorem]{Lemma}
\newtheorem{corollary}[theorem]{Corollary}
\theoremstyle{definition}
\newtheorem{definition}[theorem]{Definition}
\newtheorem{example}[theorem]{Example}
\theoremstyle{remark}
\newtheorem{remark}[theorem]{Remark}

\newcommand{\G}{\mathcal G}
\newcommand{\Gh}{\widehat{\mathcal G}}
\newcommand{\Alt}{\operatorname{Alt}}
\newcommand{\Diff}{\operatorname{Diff}}
\newcommand{\Der}{\operatorname{Der}}
\newcommand{\Ker}{\operatorname{Ker}}
\newcommand{\id}{\operatorname{id}}
\newcommand{\Lie}{\mathcal L}
\newcommand{\curl}{\operatorname{curl}}
\newcommand{\Obs}{\operatorname{Obs}}
\newcommand{\Cont}{\operatorname{Cont}_{\mathrm{str}}}
\newcommand{\ham}{\mathfrak{ham}}
\newcommand{\hamloc}{\mathfrak{ham}_{\mathrm{loc}}}
\newcommand{\hr}{\widehat\rho}

\begin{document}
\begin{frontmatter}

\title{Strict lifts and cohomological obstructions in symplectic Lie--Rinehart--Jacobi prequantization}

\author[umng]{Basile Guy Richard Bossoto\corref{cor1}}
\ead{basile.bossoto@umng.cg}
\author[umng]{Servais Cyr Gatsé}
\author[umng]{Olivier Mabiala Mikanou}

\cortext[cor1]{Corresponding author.}
\affiliation[umng]{organization={Faculté des Sciences et Techniques, Université Marien Ngouabi},
            city={Brazzaville},
            country={Republic of the Congo}}

\begin{abstract}
Let $(\G,\rho,\omega)$ be a symplectic Lie--Rinehart--Jacobi algebra over a
commutative algebra $A$, equivalently a Lie--Rinehart algebra
$(\G,\bar\rho)$ endowed with a $1$-cocycle $\chi$ and a nondegenerate
$2$-form closed for the twisted differential $d_\rho=d_{\bar\rho}+\chi\wedge$.
Starting from the standard extension by the $2$-cocycle $\omega$, we study
the strict lifting problem for its canonical $1$-form $\eta$ in the LRJ
setting. We prove that the strict symmetries of $\eta$ are precisely the
elements $X_f+fe$ and that $f\mapsto X_f+fe$ identifies the Jacobi Lie
algebra $(A,\{\,,\})$ with this Lie algebra of strict symmetries. For
locally Hamiltonian elements, the obstruction to a strict lift is the
class $[i_x\omega]\in H^1_\rho(\G,A)$, yielding the exact sequence
$0\to\ham\to\hamloc\to H^1_\rho(\G,A)\to0$. We also relate Okassa's
symplectic curl to the variation of $d\eta$ and establish a conformal
triviality criterion. In the smooth case the framework reduces to locally
conformally symplectic geometry, and on a closed surface of genus $g\geq2$
we obtain structures that are not conformally trivial and whose
obstruction space has dimension $2g-2$.
\end{abstract}

\begin{keyword}
Lie--Rinehart--Jacobi algebra \sep Jacobi algebroid \sep prequantization \sep
strict lift \sep contact form \sep Hamiltonian element \sep twisted cohomology
\end{keyword}

\end{frontmatter}

\noindent\textit{2020 Mathematics Subject Classification:} Primary 53D10, 17B66; Secondary 13N10, 53D17, 53D50.

\section{Introduction}

Lie--Rinehart algebras \cite{Rinehart1963} provide the algebraic framework
for differential calculus on Lie algebroids. Replacing derivations of the
base algebra by differential operators of order at most one, Okassa
introduced Lie--Rinehart--Jacobi (LRJ) algebras and developed their
cohomological and symplectic calculus
\cite{Okassa2007,Okassa2008,OkassaBook}. In particular, a closed
nondegenerate $2$-form on an LRJ algebra induces a Jacobi bracket on the
base algebra, and symplectic LRJ structures are closely related to contact
geometry \cite{Okassa2011}. These facts, together with the notions of
Hamiltonian and locally Hamiltonian elements used below, belong to the
established LRJ theory and constitute the background of the present work.

As recalled in Proposition~\ref{prop:decomp}, an LRJ algebra $(\G,\rho)$
may be described as a Lie--Rinehart algebra $(\G,\bar\rho)$ endowed with a
$1$-cocycle $\chi=\rho(\cdot)(1_A)$, with twisted differential
\[
d_\rho=d_{\bar\rho}+\chi\wedge.
\]
This is the algebraic counterpart of a Lie algebroid equipped with a
$1$-cocycle, as in the theory of generalized Lie algebroids or Jacobi
algebroids \cite{IglesiasMarrero2001,GrabowskiMarmo2001}. The use of
$1$-cocycles and conformal transformations in Jacobi geometry is therefore
not new; here it is used to isolate precisely what is specific to the
strict lifting problem for symplectic LRJ algebras.

In the smooth case $A=C^\infty(M)$, $\G=\mathfrak X(M)$, $\bar\rho=\id$, a
symplectic LRJ form is exactly a locally conformally symplectic (LCS) form,
and $H^\bullet_\rho$ is the Lichnerowicz (or Morse--Novikov) cohomology
defined by the closed $1$-form $\chi$ \cite{Vaisman1985,GuediraLichnerowicz1984}
(see Remark~\ref{rem:lcs}). In that setting, Hamiltonian vector fields and
the role of first twisted cohomology in distinguishing locally Hamiltonian
from Hamiltonian fields are already part of the LCS picture
\cite{Vaisman1985,HallerRybicki1999,BelgunGoertschesPetrecca2019}; in
particular, twisted Hamiltonianity is naturally controlled by
Morse--Novikov cohomology. Thus neither the appearance of a first
cohomology obstruction nor its twisted analogue is claimed here as a new
phenomenon. What is specific to the present work is its formulation for an
arbitrary symplectic LRJ algebra and its identification with the obstruction
to preserving the canonical form of the rank-one LRJ extension strictly.

Extensions defined by closed $2$-cocycles and their relation with
prequantization are classical in symplectic geometry and in the
Lie--Rinehart setting \cite{Kostant1970,Souriau1970,Huebschmann1990}.
Geometric quantization and prequantization of Jacobi manifolds were also
studied by de León, Marrero and Padrón \cite{LeonMarreroPadron1997}.
Accordingly, the purpose of this paper is not to introduce the general
idea of prequantization by a cocycle. Rather, starting from the rank-one
LRJ extension
\[
\Gh_\omega=\G\oplus Ae
\]
defined by the symplectic cocycle $\omega$, we determine which elements of
$\G$ admit lifts that preserve the canonical form $\eta$ \emph{exactly},
and we identify the corresponding obstruction in the twisted LRJ
cohomology.

The central identity is
\[
\Lie^{\hr}_{x+fe}\eta
 =\pi^*\bigl(d_\rho f-i_x\omega\bigr).
\]
It shows that a lift $x+fe$ is strict precisely when
$i_x\omega=d_\rho f$, or equivalently $x=X_f$. Consequently the map
\[
\mathcal J:A\longrightarrow\Gh_\omega,\qquad
f\longmapsto X_f+fe,
\]
identifies the Jacobi Lie algebra $(A,\{\,,\})$ with the Lie algebra of
strict symmetries of $\eta$. The projection onto $\G$ then yields a
central extension of the Hamiltonian Lie algebra by $H^0_\rho$.

For a locally Hamiltonian element $x$, the closed $1$-form $i_x\omega$
defines the class
\[
\Obs(x)=[i_x\omega]\in H^1_\rho(\G,A).
\]
Its vanishing is exactly the condition for the existence of a strict lift,
and nondegeneracy of $\omega$ gives the exact sequence
\[
0\longrightarrow\ham\longrightarrow\hamloc
\xrightarrow{\ \Obs\ }H^1_\rho(\G,A)\longrightarrow0.
\]
Thus the novelty is not the cohomological obstruction mechanism by itself:
its untwisted version is classical in symplectic geometry and closely
related twisted mechanisms occur in LCS geometry. The point here is that,
in the general LRJ setting, the same class is shown to be precisely the
obstruction to a strict lift in $\Gh_\omega$, and it fits into a canonical
exact sequence of Lie algebras. This unifies the Jacobi bracket, strict
symmetries and twisted cohomology within a single algebraic construction.

We further show that Okassa's symplectic curl measures, up to sign, the
variation of the curvature $d_{\hr}\eta$; that cohomologous symplectic
$2$-cocycles give isomorphic extensions, while the canonical form retains
additional information; and that the condition
\[
\chi=u^{-1}d_{\bar\rho}u,\qquad u\in A^\times,
\]
reduces the construction by a conformal change to the ordinary
Lie--Rinehart/Poisson situation. An algebraic example with
$\chi=c\,t^{-1}dt$, $c\notin\mathbb Z\cdot1_K$, shows that this conformal
reduction need not occur. Finally, on a closed surface of genus $g\geq2$
endowed with a closed $1$-form $\chi$ that is not exact, we obtain
symplectic LRJ structures that are not conformally trivial, with
$H^0_\rho=0$ and $\hamloc/\ham\cong\mathbb R^{2g-2}$; there both phenomena
studied here occur simultaneously (Example~\ref{ex:surface}).

The contribution of the paper should therefore be understood as the
systematic analysis of strict symmetries and strict lifts in symplectic
LRJ prequantization, together with their precise cohomological obstruction,
rather than as the introduction of cocycle extensions or Jacobi
prequantization themselves.

Section~\ref{sec:prelim} collects the LRJ preliminaries and the Cartan
calculus used throughout. Section~\ref{sec:okassa} recalls the symplectic
calculus needed from Okassa's theory. Section~\ref{sec:conforme} treats
conformal triviality. The rank-one extension and its canonical form are
studied in Section~\ref{sec:contact}. Sections~\ref{sec:strict}
--\ref{sec:curl} contain the strict-lifting and obstruction results, and
Section~\ref{sec:exemples} gives the examples.

\section{Preliminaries}\label{sec:prelim}

Throughout, $K$ is a commutative field of characteristic zero and $A$ is a
commutative unital $K$-algebra. For $a\in A$, $L_a$ denotes multiplication
by $a$. A $K$-linear operator $D:A\to A$ is a \emph{differential operator
of order at most one} if $[[D,L_a],L_b]=0$ for all $a,b\in A$, that is,
\begin{equation}\label{ordre1}
D(ab)=D(a)b+aD(b)-abD(1_A).
\end{equation}
We denote by $\Diff^1_K(A)$ the set of such operators; for
$D\in\Diff^1_K(A)$, the operator $D-L_{D(1_A)}$ is a derivation.

\subsection{Lie--Rinehart--Jacobi algebras}

\begin{definition}[Okassa \cite{Okassa2007}]
A \emph{Lie--Rinehart--Jacobi algebra} over $A$ is an $A$-module $\G$
endowed with a $K$-Lie algebra structure and an $A$-linear map
$\rho:\G\to\Diff^1_K(A)$, the \emph{anchor}, which is a Lie algebra
morphism and satisfies
\begin{equation}\label{LRJ}
[x,ay]=a[x,y]+\bigl(\rho(x)(a)-a\rho(x)(1_A)\bigr)y
\qquad(x,y\in\G,\ a\in A).
\end{equation}
\end{definition}

Set
\[
\chi(x)=\rho(x)(1_A),\qquad \bar\rho(x)=\rho(x)-L_{\chi(x)}.
\]

\begin{proposition}\label{prop:decomp}
Let $(\G,\rho)$ be an LRJ algebra.
\begin{enumerate}[label=(\roman*)]
\item $(\G,\bar\rho)$ is a Lie--Rinehart algebra, and $\chi$ is an
$A$-valued $1$-cocycle of $(\G,\bar\rho)$:
$\bar\rho(x)\chi(y)-\bar\rho(y)\chi(x)-\chi([x,y])=0$.
\item For $x\in\G$ and $a,b\in A$,
\begin{equation}\label{module}
\rho(x)(ab)=\bar\rho(x)(a)\,b+a\,\rho(x)(b).
\end{equation}
In other words, $\rho$ makes $A$ a flat module over the Lie--Rinehart
algebra $(\G,\bar\rho)$.
\end{enumerate}
Conversely, every Lie--Rinehart algebra $(\G,\bar\rho)$ together with a
$1$-cocycle $\chi\in\G^*$ defines an LRJ algebra by $\rho=\bar\rho+L_\chi$.
\end{proposition}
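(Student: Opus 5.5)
The argument is a direct verification from the defining identity \eqref{LRJ}, the order-one condition \eqref{ordre1}, and the fact that $\rho$ is an $A$-linear Lie algebra morphism.

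\emph{Step 1: $\bar\rho$ lands in derivations and \eqref{module} holds.} For each $x$, $\bar\rho(x)=\rho(x)-L_{\rho(x)(1_A)}$ is a derivation of $A$, as noted after \eqref{ordre1}. It is $A$-linear in $x$ because $\rho$ is, and hence so is $\chi(x)=\rho(x)(1_A)$. Identity \eqref{module} then follows by rewriting \eqref{ordre1} for $D=\rho(x)$:
\[
\rho(x)(ab)=\bigl(\rho(x)(a)-a\chi(x)\bigr)b+a\rho(x)(b)=\bar\rho(x)(a)\,b+a\,\rho(x)(b).
\]
The Leibniz rule for $(\G,\bar\rho)$ is exactly \eqref{LRJ}, since $\rho(x)(a)-a\rho(x)(1_A)=\bar\rho(x)(a)$.

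\emph{Step 2: cocycle condition and morphism property.} Evaluate the relation $\rho([x,y])=[\rho(x),\rho(y)]$ on $1_A$. This gives
\[
\chi([x,y])=\rho(x)\chi(y)-\rho(y)\chi(x).
\]
Substituting $\rho=\bar\rho+L_\chi$, the terms $\chi(x)\chi(y)-\chi(y)\chi(x)$ cancel, and we obtain $\bar\rho(x)\chi(y)-\bar\rho(y)\chi(x)-\chi([x,y])=0$.

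For the morphism property of $\bar\rho$, expand
\[
[\bar\rho(x)+L_{\chi(x)},\,\bar\rho(y)+L_{\chi(y)}].
\]
Since $\bar\rho(x)$ is a derivation, $[\bar\rho(x),L_b]=L_{\bar\rho(x)b}$, and multiplication operators commute with each other. The bracket therefore equals
\[
[\bar\rho(x),\bar\rho(y)]+L_{\bar\rho(x)\chi(y)-\bar\rho(y)\chi(x)}=[\bar\rho(x),\bar\rho(y)]+L_{\chi([x,y])}.
\]
Comparing with $\rho([x,y])=\bar\rho([x,y])+L_{\chi([x,y])}$ gives $\bar\rho([x,y])=[\bar\rho(x),\bar\rho(y)]$. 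Flatness of $A$ as a $(\G,\bar\rho)$-module is just the statement that $\rho$ is a Lie morphism, and \eqref{module} is the module Leibniz rule.

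\emph{Step 3: converse.} Given $(\G,\bar\rho)$ and a cocycle $\chi$, set $\rho=\bar\rho+L_\chi$. Each $\rho(x)$ is a derivation plus a multiplication, hence lies in $\Diff^1_K(A)$. The map $\rho$ is $A$-linear because both $\bar\rho$ and $\chi$ are, and $\rho(x)(1_A)=\chi(x)$. Running the computation of Step 2 backwards, the cocycle identity shows that $\rho$ is a Lie morphism. Finally, \eqref{LRJ} holds because $\rho(x)(a)-a\rho(x)(1_A)=\bar\rho(x)(a)$, so it reduces to the Lie--Rinehart Leibniz rule.

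The only step needing care is the commutator expansion in Step 2, which identifies the cross terms with $L_{\chi([x,y])}$. It rests entirely on $[\bar\rho(x),L_b]=L_{\bar\rho(x)b}$; everything else is a direct substitution.
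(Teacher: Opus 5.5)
Your proposal is correct and follows the paper's proof essentially step for step. Both arguments get the derivation property of $\bar\rho(x)$ from the order-one condition, evaluate $\rho([x,y])=[\rho(x),\rho(y)]$ at $1_A$ to obtain the cocycle identity, and expand the commutator to get $\bar\rho([x,y])=[\bar\rho(x),\bar\rho(y)]$; the converse is the same computation read backwards. The only differences are cosmetic: you spell out $[\bar\rho(x),L_b]=L_{\bar\rho(x)b}$ and the fact that $\bar\rho(x)+L_{\chi(x)}\in\Diff^1_K(A)$, both of which the paper leaves implicit.
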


\begin{proof}
By \eqref{ordre1}, $\bar\rho(x)$ is a derivation, and $\bar\rho$ is
$A$-linear. Relation \eqref{LRJ} reads $[x,ay]=a[x,y]+\bar\rho(x)(a)y$.
Since $\rho$ is a Lie algebra morphism,
\[
\chi([x,y])=\rho(x)\chi(y)-\rho(y)\chi(x)
=\bar\rho(x)\chi(y)-\bar\rho(y)\chi(x),
\]
the terms $\chi(x)\chi(y)$ cancelling; this is the cocycle relation.
Expanding $[\bar\rho(x)+L_{\chi(x)},\bar\rho(y)+L_{\chi(y)}]$ gives
$\rho([x,y])=[\bar\rho(x),\bar\rho(y)]+L_{\chi([x,y])}$, hence
$\bar\rho([x,y])=[\bar\rho(x),\bar\rho(y)]$. Finally,
$\rho(x)(ab)=\bar\rho(x)(ab)+\chi(x)ab=\bar\rho(x)(a)b+a\rho(x)(b)$;
flatness follows from the fact that $\rho$ is a Lie algebra morphism. The
converse is checked by the same computations.
\end{proof}

\subsection{The complex and the Cartan calculus}

For $p\geq0$, set $C^p_\rho(\G,A)=\Alt_A^p(\G,A)$ and, for
$\beta\in C^p_\rho(\G,A)$,
\begin{align}\label{drho}
(d_\rho\beta)(x_0,\ldots,x_p)
&=\sum_i(-1)^i\rho(x_i)\,\beta(x_0,\ldots,\widehat{x_i},\ldots,x_p)\notag\\
&\quad+\sum_{i<j}(-1)^{i+j}
\beta([x_i,x_j],x_0,\ldots,\widehat{x_i},\ldots,\widehat{x_j},\ldots,x_p).
\end{align}
In degrees zero and one,
\begin{equation}\label{d01}
(d_\rho a)(x)=\rho(x)(a),\qquad
(d_\rho\beta)(x,y)=\rho(x)\beta(y)-\rho(y)\beta(x)-\beta([x,y]).
\end{equation}
The exterior product is normalized without factorial factors; in
particular, for $\alpha\in C^1$ and $\beta\in C^p$,
$(\alpha\wedge\beta)(x_0,\ldots,x_p)=\sum_i(-1)^i\alpha(x_i)\beta(x_0,\ldots,
\widehat{x_i},\ldots,x_p)$.

\begin{proposition}\label{prop:complexe}
Formula \eqref{drho} defines an $A$-multilinear map, and
$(C^\bullet_\rho(\G,A),d_\rho)$ is a complex. Moreover
\[
d_\rho=d_{\bar\rho}+\chi\wedge(\cdot),\qquad d_\rho1_A=\chi .
\]
The operator $d_{\bar\rho}$ is a graded derivation of the exterior product,
but $d_\rho$ is not: $d_\rho(\alpha\wedge\beta)=d_{\bar\rho}\alpha
\wedge\beta+(-1)^{|\alpha|}\alpha\wedge d_\rho\beta$.
\end{proposition}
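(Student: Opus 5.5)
The plan is to handle the claims in order: $A$-multilinearity, the decomposition $d_\rho=d_{\bar\rho}+\chi\wedge$, the identity $d_\rho 1_A=\chi$, $d_\rho^2=0$, and finally the Leibniz-type formula.

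\textbf{Multilinearity and alternation.} I would first check that $d_\rho\beta$ is alternating; this follows from the alternation of $\beta$ and the antisymmetry of the bracket, exactly as for the usual Chevalley--Eilenberg differential. For $A$-linearity, by alternation it suffices to replace $x_0$ by $ax_0$.
\begin{itemize}
\item In the first sum, $A$-linearity of $\rho$ gives $\rho(ax_0)=a\rho(x_0)$ for the $i=0$ term.
\item For $i\ge1$, I expand $\rho(x_i)\bigl(a\,\beta(\dots)\bigr)$ with \eqref{module}. This produces an extra term $\bar\rho(x_i)(a)\,\beta(\dots)$.
\item In the second sum, \eqref{LRJ} in the form $[x_i,ax_0]=a[x_i,x_0]+\bar\rho(x_i)(a)x_0$ produces $-\bar\rho(x_i)(a)\,\beta(x_0,\dots)$, with the sign coming from reordering.
\end{itemize}
These extra terms cancel pairwise. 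The point is that only the derivation part $\bar\rho$ appears in the defect terms, so the $\chi$ part never spoils linearity.

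\textbf{Decomposition and $d_\rho 1_A=\chi$.} Next I would substitute $\rho(x_i)=\bar\rho(x_i)+L_{\chi(x_i)}$ into \eqref{drho}. The $\bar\rho$-terms together with the bracket terms give exactly $d_{\bar\rho}\beta$. The remaining terms are $\sum_i(-1)^i\chi(x_i)\beta(\dots\widehat{x_i}\dots)$, which by the stated normalization of $\wedge$ is $(\chi\wedge\beta)(x_0,\dots,x_p)$. The identity $d_\rho1_A=\chi$ is then immediate from \eqref{d01}.

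\textbf{$d_\rho^2=0$.} I see two routes. The first is to use that $A$ with the action $\rho$ is a flat $(\G,\bar\rho)$-module (Proposition~\ref{prop:decomp}(ii)), and invoke the standard fact that the Chevalley--Eilenberg differential with coefficients in a flat module squares to zero. Since the full computation is the usual one using only Jacobi and the morphism property of $\rho$, no extra work is needed.

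The second route is to write
\[
d_\rho^2=d_{\bar\rho}^2+d_{\bar\rho}(\chi\wedge\cdot)+\chi\wedge d_{\bar\rho}+\chi\wedge\chi\wedge .
\]
Here I would use that $d_{\bar\rho}$ is a graded derivation (the usual Lie--Rinehart fact, checked on generators), that $d_{\bar\rho}\chi=0$ is the cocycle condition, and that $\chi\wedge\chi=0$.

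\textbf{Leibniz-type formula.} I would write $d_\rho(\alpha\wedge\beta)=d_{\bar\rho}\alpha\wedge\beta+(-1)^{|\alpha|}\alpha\wedge d_{\bar\rho}\beta+\chi\wedge\alpha\wedge\beta$. Then I rewrite $\chi\wedge\alpha=(-1)^{|\alpha|}\alpha\wedge\chi$ to absorb the last term into $(-1)^{|\alpha|}\alpha\wedge d_\rho\beta$. For $d_\rho$ not being a derivation, it suffices to note that $d_\rho(1\cdot1)=\chi\neq2\chi$ when $\chi\neq0$.

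\textbf{Main obstacle.} I expect the main obstacle to be sign bookkeeping: the cancellation in the $A$-linearity check, and the graded-derivation property of $d_{\bar\rho}$ with the factorial-free normalization of $\wedge$. I would verify the latter first in low degrees and then argue by the usual shuffle formula.
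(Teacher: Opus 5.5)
Your proposal is correct and follows the same route as the paper. You get $A$-multilinearity and $d_\rho^2=0$ from the fact that $(A,\rho)$ is a flat $(\G,\bar\rho)$-module, the decomposition by substituting $\rho=\bar\rho+L_\chi$ into the first sum of \eqref{drho}, and the twisted Leibniz rule from the graded-derivation property of $d_{\bar\rho}$ together with $\chi\wedge\alpha=(-1)^{|\alpha|}\alpha\wedge\chi$. The additions only make explicit what the paper delegates to Rinehart's general theory or leaves implicit: your hand check of the cancellations for $A$-linearity, the alternative route to $d_\rho^2=0$ via $d_{\bar\rho}\chi=0$ and $\chi\wedge\chi=0$, and the witness $d_\rho(1_A\cdot1_A)=\chi\neq2\chi$.
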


\begin{proof}
By Proposition~\ref{prop:decomp}, $d_\rho$ is the
Chevalley--Eilenberg--Rinehart differential of $(\G,\bar\rho)$ with
coefficients in the flat module $(A,\rho)$; it is therefore $A$-multilinear
and squares to zero \cite{Rinehart1963}. The difference $d_\rho-d_{\bar\rho}$
only affects the first sum in \eqref{drho} and equals
$\sum_i(-1)^i\chi(x_i)\beta(\ldots,\widehat{x_i},\ldots)=(\chi\wedge\beta)
(x_0,\ldots,x_p)$. The last formula follows, since $d_{\bar\rho}$ is a
graded derivation.
\end{proof}

We denote by $Z^p_\rho$, $B^p_\rho$ and $H^p_\rho(\G,A)$ the cocycles,
coboundaries and cohomology groups. In particular,
$H^0_\rho=\{a\in A:\rho(x)(a)=0\ \forall x\in\G\}$; if $\chi\neq0$, then
$1_A\notin H^0_\rho$.

For $x\in\G$, let $i_x$ denote the interior product and
$\Lie^\rho_x=i_xd_\rho+d_\rho i_x$ the Lie derivative.

\begin{lemma}[Cartan calculus]\label{lem:cartan}
For $x,y\in\G$ and $\beta\in C^p_\rho(\G,A)$,
\[
(\Lie^\rho_x\beta)(y_1,\ldots,y_p)=\rho(x)\beta(y_1,\ldots,y_p)
-\sum_k\beta(y_1,\ldots,[x,y_k],\ldots,y_p),
\]
and one has
$[\Lie^\rho_x,d_\rho]=0$, $[\Lie^\rho_x,i_y]=i_{[x,y]}$ and
$[\Lie^\rho_x,\Lie^\rho_y]=\Lie^\rho_{[x,y]}$.
\end{lemma}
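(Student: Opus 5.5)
The plan is to first establish the explicit formula for $\Lie^\rho_x\beta$ and then derive the three commutation relations from it and from $d_\rho^2=0$.

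\textbf{Step 1: the explicit formula.} Expand $i_xd_\rho\beta+d_\rho i_x\beta$ using \eqref{drho}, evaluating on $(y_1,\ldots,y_p)$ and setting $x_0=x$, $x_k=y_k$:
\begin{itemize}[label=$\cdot$]
\item In $(d_\rho\beta)(x,y_1,\ldots,y_p)$, the $i=0$ term of the first sum is $\rho(x)\beta(y_1,\ldots,y_p)$.
\item The terms $i=0<j$ of the second sum give $\sum_j(-1)^j\beta([x,y_j],y_1,\ldots,\widehat{y_j},\ldots)$. Moving $[x,y_j]$ back to slot $j$ costs $(-1)^{j-1}$, so these contribute $-\sum_k\beta(y_1,\ldots,[x,y_k],\ldots,y_p)$.
\item The remaining terms, with $i,j\geq1$, involve only the $y$'s (with $x$ inside $\beta$). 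Their signs are opposite to those of the corresponding terms of $(d_\rho i_x\beta)(y_1,\ldots,y_p)$, so they cancel.
\end{itemize}
This is the classical Chevalley--Eilenberg computation. Since $d_\rho$ is the Chevalley--Eilenberg--Rinehart differential with coefficients in the flat module $(A,\rho)$ (Proposition~\ref{prop:complexe}), nothing LRJ-specific intervenes: the proof never uses that $\rho(x)$ is a derivation.

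\textbf{Step 2: $[\Lie^\rho_x,d_\rho]=0$.} This is immediate from $d_\rho^2=0$:
\[
\Lie^\rho_x d_\rho=d_\rho i_x d_\rho=d_\rho\Lie^\rho_x .
\]

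\textbf{Step 3: $[\Lie^\rho_x,i_y]=i_{[x,y]}$.} Evaluate both sides on $(y_1,\ldots,y_{p-1})$ using the formula of Step 1. In $\Lie^\rho_x(i_y\beta)-i_y\Lie^\rho_x\beta$, the $\rho(x)$ terms cancel. The bracket terms also cancel, except the one in which $[x,\cdot]$ hits the first slot $y$ in $i_y\Lie^\rho_x\beta$. That surviving term carries a minus sign inside $\Lie^\rho_x\beta$, and it is subtracted, so the total is $+\beta([x,y],y_1,\ldots)=(i_{[x,y]}\beta)(y_1,\ldots)$.

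\textbf{Step 4: $[\Lie^\rho_x,\Lie^\rho_y]=\Lie^\rho_{[x,y]}$.} Apply the formula of Step 1 twice and compare.
\begin{itemize}[label=$\cdot$]
\item The $\rho$ terms give $[\rho(x),\rho(y)]\beta=\rho([x,y])\beta$, because $\rho$ is a Lie algebra morphism.
\item The mixed terms, of the form $\rho(x)\beta(\ldots,[y,y_k],\ldots)$ and the analogous ones with $x,y$ exchanged, cancel pairwise.
\item The double-bracket terms in the same slot give $\beta(\ldots,[y,[x,y_k]]-[x,[y,y_k]],\ldots)$. By the Jacobi identity this equals $-\beta(\ldots,[[x,y],y_k],\ldots)$.
\item Double-bracket terms in distinct slots cancel by symmetry.
\end{itemize}

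Alternatively, Step 4 follows formally from Steps 2 and 3, since
\[
[\Lie^\rho_x,\Lie^\rho_y]=[\Lie^\rho_x,[i_y,d_\rho]]=[[\Lie^\rho_x,i_y],d_\rho]=[i_{[x,y]},d_\rho],
\]
using the graded Jacobi identity together with $[\Lie^\rho_x,d_\rho]=0$.

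\textbf{Expected difficulties.} The main obstacle is only sign bookkeeping in Step 1, specifically reindexing the $i=0$ bracket terms and checking the cancellation of the remaining ones. A further point to check is well-definedness: $\Lie^\rho_x\beta$ must lie in $\Alt^p_A$. This holds because $d_\rho$ and $i_x$ preserve $A$-multilinear cochains (Proposition~\ref{prop:complexe}), so no separate check is needed, even though the individual terms of the formula are not $A$-linear.
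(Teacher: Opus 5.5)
Your proof is correct and follows essentially the paper's route. The paper observes that $C^\bullet_\rho(\G,A)$ is a subcomplex of the Chevalley--Eilenberg complex of the representation $\rho$ of the $K$-Lie algebra $\G$ on $A$, stable under the $i_x$, and cites the usual Cartan identities checked on the explicit formulas; you carry out exactly that verification, and your sign bookkeeping in Step 1 and the graded-Jacobi derivation of Step 4 from Steps 2--3 are both correct.
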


\begin{proof}
The anchor $\rho$ is a representation of the $K$-Lie algebra $\G$ on the
vector space $A$, and $C^\bullet_\rho(\G,A)$ is a subcomplex of the
Chevalley--Eilenberg complex $\Alt^\bullet_K(\G,A)$ of this representation,
stable under the operators $i_x$. The stated identities are the usual Cartan
identities of that complex, which are checked on the explicit formulas.
\end{proof}

\begin{lemma}\label{lem:pullback}
Let $\varphi:(\G_1,\rho_1)\to(\G_2,\rho_2)$ be a morphism of LRJ
algebras, that is, an $A$-linear map preserving brackets and satisfying
$\rho_2\circ\varphi=\rho_1$. Then
$\varphi^*\circ d_{\rho_2}=d_{\rho_1}\circ\varphi^*$ and
$i_x\varphi^*=\varphi^*i_{\varphi(x)}$ for all $x\in\G_1$.
\end{lemma}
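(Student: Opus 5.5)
The plan is to verify both identities directly on arguments, using the explicit formula \eqref{drho} and the definition of the pullback $(\varphi^*\beta)(x_1,\ldots,x_p)=\beta(\varphi(x_1),\ldots,\varphi(x_p))$. First observe that $\varphi^*$ maps $C^p_{\rho_2}(\G_2,A)$ into $C^p_{\rho_1}(\G_1,A)$. This holds because $\varphi$ is $A$-linear, so $\varphi^*\beta$ is again $A$-multilinear and alternating.

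For the interior product, take $\beta\in C^p_{\rho_2}$, $x\in\G_1$ and $y_2,\ldots,y_p\in\G_1$. Both sides evaluate to $\beta(\varphi(x),\varphi(y_2),\ldots,\varphi(y_p))$, so $i_x\varphi^*\beta=\varphi^*i_{\varphi(x)}\beta$. Nothing more is needed here.

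For the differential, evaluate $(d_{\rho_1}\varphi^*\beta)(x_0,\ldots,x_p)$ using \eqref{drho} and treat the two sums separately.

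In the first sum, each term is $\rho_1(x_i)\,\beta(\varphi(x_0),\ldots,\widehat{\varphi(x_i)},\ldots,\varphi(x_p))$. Since $\rho_1=\rho_2\circ\varphi$, this equals $\rho_2(\varphi(x_i))$ applied to the same element of $A$.

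In the second sum, each term contains $\beta(\varphi([x_i,x_j]),\ldots)$. Since $\varphi$ preserves brackets, this becomes $\beta([\varphi(x_i),\varphi(x_j)],\ldots)$.

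After these substitutions, the expression is exactly $(d_{\rho_2}\beta)(\varphi(x_0),\ldots,\varphi(x_p))=(\varphi^*d_{\rho_2}\beta)(x_0,\ldots,x_p)$. The degree-zero case is included: for $a\in A$ we have $\varphi^*a=a$, and $(d_{\rho_1}a)(x)=\rho_1(x)(a)=\rho_2(\varphi(x))(a)$.

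There is no real obstacle. The only points needing attention are that the twisting term $\chi\wedge$ requires no separate treatment, and that the pullback is well defined. The first holds because the twisting is built into $\rho$; alternatively, $\chi_1=\chi_2\circ\varphi$ follows from $\rho_1(x)(1_A)=\rho_2(\varphi(x))(1_A)$. The second is the $A$-linearity observation made at the start.
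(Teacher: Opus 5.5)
Your proposal is correct and follows the paper's own proof, which is simply to substitute $\varphi(x_i)$ into \eqref{drho}, using $\rho_1=\rho_2\circ\varphi$ in the first sum and bracket preservation in the second. Your additional remarks, that $\varphi^*$ is well defined by $A$-linearity and that the interior product identity is immediate from the definition of the pullback, fill in details the paper leaves implicit.
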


\begin{proof}
Substitute $\varphi(x_i)$ in \eqref{drho}.
\end{proof}

\section{Okassa's symplectic calculus}\label{sec:okassa}

The results of this section are due to Okassa
\cite{Okassa2007,Okassa2008},\cite[Ch.~9]{OkassaBook}; we recall them with
short proofs in order to fix conventions.

\begin{definition}[{\cite[Def.~9.42]{OkassaBook}}]
A \emph{symplectic form} on the LRJ algebra $(\G,\rho)$ is a form
$\omega\in C^2_\rho(\G,A)$ such that $d_\rho\omega=0$ and
$\omega^\flat:\G\to\G^*$, $x\mapsto i_x\omega$, is an isomorphism of
$A$-modules. The triple $(\G,\rho,\omega)$ is then a \emph{symplectic LRJ
algebra}.
\end{definition}

For $a\in A$, the \emph{symplectic gradient} $X_a\in\G$ is defined by
\begin{equation}\label{grad}
i_{X_a}\omega=d_\rho a,\qquad\text{i.e.}\qquad\omega(X_a,y)=\rho(y)(a).
\end{equation}
We set $\{a,b\}=-\omega(X_a,X_b)$.

\begin{proposition}[{\cite[Props.~9.43, 9.44 and Thm.~9.45]{OkassaBook}}]
\label{prop:okassa}
For all $a,b,c\in A$:
\begin{enumerate}[label=(\roman*)]
\item $\Lie^\rho_{X_a}\omega=0$;
\item $\{a,b\}=\rho(X_a)(b)$ and $[X_a,X_b]=X_{\{a,b\}}$;
\item $(A,\{\,,\})$ is a Lie algebra, and
$\{a,bc\}=\{a,b\}c+b\{a,c\}-\{a,1_A\}bc$; in other words $(A,\{\,,\})$ is a
Jacobi algebra.
\end{enumerate}
\end{proposition}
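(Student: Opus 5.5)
The plan is to derive everything from the Cartan calculus (Lemma~\ref{lem:cartan}), the closedness $d_\rho\omega=0$, and the defining relation \eqref{grad}. The one delicate point is that $d_\rho$ is not a derivation, so Leibniz-type manipulations must be done carefully.

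For (i), I will use $\Lie^\rho_{X_a}\omega=i_{X_a}d_\rho\omega+d_\rho i_{X_a}\omega$. The first term vanishes because $\omega$ is closed. The second is $d_\rho d_\rho a$, which vanishes by Proposition~\ref{prop:complexe}.

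For (ii), the first identity is immediate from the definitions:
\[
\{a,b\}=-\omega(X_a,X_b)=\omega(X_b,X_a)=\rho(X_a)(b),
\]
where the last step uses \eqref{grad} for $b$. For the bracket I will compute $i_{[X_a,X_b]}\omega$ using $[\Lie^\rho_{X_a},i_{X_b}]=i_{[X_a,X_b]}$ together with (i):
\[
i_{[X_a,X_b]}\omega=\Lie^\rho_{X_a}i_{X_b}\omega=\Lie^\rho_{X_a}d_\rho b=d_\rho\Lie^\rho_{X_a}b .
\]
On $0$-forms, $\Lie^\rho_{X_a}b=i_{X_a}d_\rho b=\rho(X_a)(b)=\{a,b\}$. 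Hence $i_{[X_a,X_b]}\omega=d_\rho\{a,b\}=i_{X_{\{a,b\}}}\omega$, and injectivity of $\omega^\flat$ gives $[X_a,X_b]=X_{\{a,b\}}$.

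For (iii), antisymmetry follows from the antisymmetry of $\omega$, and bilinearity over $K$ is clear. For the Jacobi identity I will write
\[
\{a,\{b,c\}\}=\rho(X_a)\rho(X_b)c .
\]
I then use that $\rho$ is a Lie algebra morphism together with (ii):
\[
\rho(X_a)\rho(X_b)c-\rho(X_b)\rho(X_a)c=\rho([X_a,X_b])c=\rho(X_{\{a,b\}})c=\{\{a,b\},c\}.
\]
This is exactly the Jacobi identity in Leibniz form. For the first-order Leibniz rule I apply \eqref{ordre1} to the operator $D=\rho(X_a)\in\Diff^1_K(A)$. This gives
\[
\{a,bc\}=\{a,b\}c+b\{a,c\}-bc\,\rho(X_a)(1_A),
\]
and $\rho(X_a)(1_A)=\{a,1_A\}$ by (ii).

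The only step needing care is the identification of $\Lie^\rho_{X_a}$ on $0$-forms and its commutation with $d_\rho$. The explicit formula in Lemma~\ref{lem:cartan} handles this, since for $p=0$ it gives $\Lie^\rho_x b=\rho(x)b$. I therefore expect no real obstacle. The main thing to watch is sign conventions, namely that \eqref{grad} fixes $\omega(X_a,y)=\rho(y)(a)$ with $X_a$ in the first slot, so the sign in $\{a,b\}=\rho(X_a)(b)$ comes out as stated.
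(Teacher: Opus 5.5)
Your proposal is correct and follows the paper's proof essentially line by line: (i) comes from the Cartan formula with $d_\rho\omega=0$ and $d_\rho^2=0$; (ii) uses $[\Lie^\rho_{X_a},i_{X_b}]=i_{[X_a,X_b]}$, $[\Lie^\rho_{X_a},d_\rho]=0$ and injectivity of $\omega^\flat$; and (iii) uses that $\rho$ is a Lie algebra morphism together with \eqref{ordre1} applied to $\rho(X_a)$. Your explicit check that $\Lie^\rho_{X_a}b=\rho(X_a)(b)$ on $0$-forms simply spells out a step the paper leaves implicit.
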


\begin{proof}
(i) $\Lie^\rho_{X_a}\omega=i_{X_a}d_\rho\omega+d_\rho d_\rho a=0$.
(ii) By \eqref{grad}, $-\omega(X_a,X_b)=\omega(X_b,X_a)=\rho(X_a)(b)$. By
Lemma~\ref{lem:cartan} and (i),
$i_{[X_a,X_b]}\omega=\Lie^\rho_{X_a}(d_\rho b)=d_\rho(\rho(X_a)(b))
=d_\rho\{a,b\}$, and we conclude by injectivity of $\omega^\flat$.
(iii) Skew-symmetry is clear. By (ii),
$\{\{a,b\},c\}=\rho([X_a,X_b])(c)=\{a,\{b,c\}\}-\{b,\{a,c\}\}$, which is the
Jacobi identity. The generalized Leibniz rule follows from
\eqref{ordre1} applied to $\rho(X_a)$, with $\rho(X_a)(1_A)=\{a,1_A\}$.
\end{proof}

The definition immediately gives $i_{X_{1_A}}\omega=d_\rho1_A=\chi$.

\begin{proposition}\label{prop:xone}
The following conditions are equivalent: (i) $X_{1_A}=0$; (ii) $\chi=0$;
(iii) $\rho$ takes values in $\Der_K(A)$. They imply that $\{\,,\}$ is a
Poisson bracket. The converse holds if the $X_a$ generate $\G$ as an
$A$-module.
\end{proposition}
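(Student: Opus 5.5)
The equivalence of (i) and (ii) comes straight from the identity $i_{X_{1_A}}\omega=d_\rho 1_A=\chi$ recorded just before the statement, combined with injectivity of $\omega^\flat$. If $X_{1_A}=0$, then $\chi=i_0\omega=0$. Conversely, if $\chi=0$, then $\omega^\flat(X_{1_A})=0$, and since $\omega^\flat$ is an isomorphism, $X_{1_A}=0$. For (ii)$\Leftrightarrow$(iii) I will use Proposition~\ref{prop:decomp}, which gives $\rho(x)=\bar\rho(x)+L_{\chi(x)}$ with $\bar\rho(x)$ a derivation. If $\chi=0$, then $\rho=\bar\rho$ takes values in $\Der_K(A)$. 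Conversely, if every $\rho(x)$ is a derivation, then $\chi(x)=\rho(x)(1_A)=0$, because a derivation kills $1_A$ (from $D(1)=D(1\cdot1)=2D(1)$).

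For the implication to the Poisson property, Proposition~\ref{prop:okassa}(iii) already gives that $(A,\{\,,\})$ is a Lie algebra satisfying $\{a,bc\}=\{a,b\}c+b\{a,c\}-\{a,1_A\}bc$. It therefore suffices to show $\{a,1_A\}=0$ for all $a$. By Proposition~\ref{prop:okassa}(ii), $\{a,1_A\}=\rho(X_a)(1_A)=\chi(X_a)$, which vanishes under (ii). The bracket $\{a,\cdot\}$ is then a derivation, and by skew-symmetry it is a derivation in each argument, hence a Poisson bracket.

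For the converse under the hypothesis that the $X_a$ generate $\G$, assume $\{\,,\}$ is Poisson. Then $\{a,1_A\}=0$ for every $a$: either this is part of the Leibniz rule itself, or it follows from $\{a,1\}=\{a,1\cdot1\}=2\{a,1\}$. By the computation above, $\chi(X_a)=\{a,1_A\}=0$ for all $a\in A$. Since $\chi\in\G^*$ is $A$-linear and the $X_a$ generate $\G$ as an $A$-module, $\chi$ vanishes on all of $\G$, which is (ii).

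The only step requiring care is matching the Poisson condition to $\chi(X_a)=0$ through the identity $\{a,1_A\}=\rho(X_a)(1_A)$, and then invoking $A$-linearity of $\chi$ to pass from generators to all of $\G$. Everything else is immediate from earlier results.
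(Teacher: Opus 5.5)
Your proposal is correct and follows essentially the same route as the paper, which settles (i)$\Leftrightarrow$(ii) by injectivity of $\omega^\flat$ and (ii)$\Leftrightarrow$(iii) by the definition of $\chi$, and uses $\{a,1_A\}=\chi(X_a)$ for the Poisson statement. The only cosmetic difference is in the converse: you get $\chi=0$ directly from $\chi(X_a)=0$ by $A$-linearity, whereas the paper gets $X_{1_A}=0$ from $\omega(X_a,X_{1_A})=0$, and these are the same argument since $\chi=i_{X_{1_A}}\omega$.
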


\begin{proof}
(i)$\Leftrightarrow$(ii) by injectivity of $\omega^\flat$, and
(ii)$\Leftrightarrow$(iii) by definition of $\chi$. If $\chi=0$, then
$\{a,1_A\}=\chi(X_a)=0$. Conversely, if $\{a,1_A\}=0$ for all $a$, then
$\omega(X_a,X_{1_A})=0$ for all $a$; if the $X_a$ generate $\G$, it follows
that $X_{1_A}=0$.
\end{proof}

\begin{definition}[{\cite[Defs.~9.46 and 9.48]{OkassaBook}}]
An element $x\in\G$ is \emph{locally Hamiltonian} if
$\Lie^\rho_x\omega=0$, and \emph{Hamiltonian} if there exists $a\in A$ with
$x=X_a$. We denote by $\hamloc$ and $\ham$ the corresponding sets. The
\emph{symplectic curl} is the $K$-linear map
$\curl_\omega:\G\to C^2_\rho(\G,A)$, $x\mapsto\Lie^\rho_x\omega$.
\end{definition}

Since $d_\rho\omega=0$, one has $\Lie^\rho_x\omega=d_\rho(i_x\omega)$. Hence
\begin{equation}\label{hamZB}
\hamloc=(\omega^\flat)^{-1}(Z^1_\rho),\qquad
\ham=(\omega^\flat)^{-1}(B^1_\rho).
\end{equation}

\begin{proposition}[{\cite[Props.~9.47 and 9.49]{OkassaBook}}]
\label{prop:hamloc}
\begin{enumerate}[label=(\roman*)]
\item For $x,y\in\hamloc$, $i_{[x,y]}\omega=d_\rho\bigl(\omega(y,x)\bigr)$;
in particular $[\hamloc,\hamloc]\subset\ham$. Thus $\hamloc$ is a Lie
subalgebra of $\G$ and $\ham$ is an ideal of it.
\item For $a\in A$ and $x\in\G$,
$\curl_\omega(ax)=d_{\bar\rho}a\wedge i_x\omega+a\curl_\omega(x)$, where
$d_{\bar\rho}a=d_\rho a-a\,d_\rho1_A$.
\end{enumerate}
\end{proposition}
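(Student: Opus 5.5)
For (i), I will compute $i_{[x,y]}\omega$ with the Cartan identity $[\Lie^\rho_x,i_y]=i_{[x,y]}$ from Lemma~\ref{lem:cartan}:
\[
i_{[x,y]}\omega=\Lie^\rho_x i_y\omega-i_y\Lie^\rho_x\omega .
\]
The second term vanishes because $x\in\hamloc$. For the first term I expand $\Lie^\rho_x=i_xd_\rho+d_\rho i_x$ on the $1$-form $i_y\omega$:
\[
\Lie^\rho_x(i_y\omega)=i_xd_\rho(i_y\omega)+d_\rho\bigl(\omega(y,x)\bigr).
\]
By the remark preceding \eqref{hamZB}, $d_\rho(i_y\omega)=\Lie^\rho_y\omega=0$ since $y\in\hamloc$. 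This gives $i_{[x,y]}\omega=d_\rho(\omega(y,x))$. Hence $[x,y]=X_{\omega(y,x)}\in\ham$ by injectivity of $\omega^\flat$.

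The structural claims then follow. $\hamloc$ and $\ham$ are $K$-subspaces, being preimages under the $A$-linear map $\omega^\flat$ of the subspaces $Z^1_\rho$ and $B^1_\rho$ in \eqref{hamZB}. Closure of $\hamloc$ under the bracket is immediate from $[\hamloc,\hamloc]\subset\ham\subset\hamloc$. Here $\ham\subset\hamloc$ holds because $B^1_\rho\subset Z^1_\rho$ (Proposition~\ref{prop:complexe}), and the same inclusion makes $\ham$ an ideal of $\hamloc$.

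For (ii), I start from $\curl_\omega(ax)=d_\rho(i_{ax}\omega)=d_\rho(a\,i_x\omega)$, using $d_\rho\omega=0$ and $A$-linearity of the interior product. Then I apply the product rule of Proposition~\ref{prop:complexe} with $\alpha=a$ of degree $0$:
\[
d_\rho(a\wedge\beta)=d_{\bar\rho}a\wedge\beta+a\,d_\rho\beta .
\]
With $\beta=i_x\omega$ this gives $d_{\bar\rho}a\wedge i_x\omega+a\,\curl_\omega(x)$.

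It remains to identify $d_{\bar\rho}a=d_\rho a-a\,d_\rho1_A$. This follows from $d_\rho=d_{\bar\rho}+\chi\wedge$ in degree zero, which gives $d_\rho a=d_{\bar\rho}a+a\chi$, together with $d_\rho 1_A=\chi$.

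The only delicate point is the mixed Leibniz rule, since $d_\rho$ is not a derivation. I could verify it directly in degree $0$ via \eqref{module}. Evaluate $d_\rho(a\beta)(y,z)$ using \eqref{d01}, and expand $\rho(y)(a\beta(z))=\bar\rho(y)(a)\beta(z)+a\rho(y)\beta(z)$. The $\bar\rho$-terms assemble to $(d_{\bar\rho}a\wedge\beta)(y,z)$ in the paper's normalization, and the remaining terms give $a\,d_\rho\beta$.
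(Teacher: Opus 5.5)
Your proof is correct and follows the paper's argument essentially verbatim. For (i) you use $[\Lie^\rho_x,i_y]=i_{[x,y]}$ together with $d_\rho i_y\omega=\Lie^\rho_y\omega=0$, and for (ii) you apply the mixed Leibniz rule $d_\rho(a\beta)=d_{\bar\rho}a\wedge\beta+a\,d_\rho\beta$ of Proposition~\ref{prop:complexe} to $\beta=i_x\omega$. Your extra checks are all sound: the identification $d_{\bar\rho}a=d_\rho a-a\,d_\rho1_A$, the degree-zero verification via \eqref{module}, and the ideal property, which is best stated as $[\hamloc,\ham]\subset[\hamloc,\hamloc]\subset\ham$.
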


\begin{proof}
(i) By Lemma~\ref{lem:cartan},
$i_{[x,y]}\omega=\Lie^\rho_xi_y\omega-i_y\Lie^\rho_x\omega
=d_\rho i_xi_y\omega+i_xd_\rho i_y\omega=d_\rho(\omega(y,x))$, since
$d_\rho i_y\omega=\Lie^\rho_y\omega=0$.
(ii) By Proposition~\ref{prop:complexe},
$d_\rho(a\,i_x\omega)=d_{\bar\rho}a\wedge i_x\omega+a\,d_\rho(i_x\omega)$.
\end{proof}

\begin{remark}[The smooth case: locally conformally symplectic manifolds]
\label{rem:lcs}
Let $M$ be a smooth manifold, $K=\mathbb R$, $A=C^\infty(M)$ and
$\G=\mathfrak X(M)$. By Proposition~\ref{prop:decomp}, the LRJ structures
on $\G$ with $\bar\rho=\id$ are exactly those given by
$\rho(X)=X+\chi(X)$ with $\chi\in\Omega^1(M)$ closed; see also
\cite[Props.~11.33--11.34]{OkassaBook}. Then $C^\bullet_\rho(\G,A)=
\Omega^\bullet(M)$ and $d_\rho=d+\chi\wedge$ is the Lichnerowicz
differential, often written $d_\theta=d-\theta\wedge$ with $\theta=-\chi$.
A symplectic form is a nondegenerate $2$-form satisfying
$d\omega=-\chi\wedge\omega$, that is, a locally conformally symplectic form
with Lee form $-\chi$ in the convention $d\omega=\theta\wedge\omega$ of
\cite{Vaisman1985}, and $\{\,,\}$ is the Jacobi bracket associated with it
\cite{GuediraLichnerowicz1984}. By Lemma~\ref{lem:cartan},
$\Lie^\rho_X\beta=\Lie_X\beta+\chi(X)\beta$ on forms, so
\[
\hamloc=\{X:\Lie_X\omega=-\chi(X)\,\omega\},\qquad
\ham=\{X:i_X\omega=df+f\chi\ \text{for some }f\}.
\]
In this setting the description of $\hamloc/\ham$ by the first
Lichnerowicz cohomology is classical
\cite{Vaisman1985,HallerRybicki1999}, and Theorem~\ref{thm:exact} below
extends it to arbitrary LRJ algebras. What is specific to the present
framework is, on the one hand, that $A$ may be an arbitrary commutative
algebra, for which conformal triviality is no longer equivalent to the
exactness of $\chi$ (for instance, on $K[t^{\pm1}]$ the form $t^{-1}dt$ is
conformally trivial but not exact, while on $K[s]$ the form $ds$ is exact
but not conformally trivial; see also Remark~\ref{rem:conforme} and
Example~\ref{ex:jacobi}), and, on the
other hand, the interpretation of $\hamloc/\ham$ as the space of
obstructions to strict lifting in $\Gh_\omega$.
\end{remark}

\section{Conformal change and conformal triviality}\label{sec:conforme}

The following proposition shows that a nonzero cocycle $\chi$ is not
enough to make a structure ``genuinely Jacobi''.

\begin{proposition}\label{prop:conforme}
Let $(\G,\rho,\omega)$ be a symplectic LRJ algebra and assume
$\chi=u^{-1}d_{\bar\rho}u$ for some unit $u\in A^\times$. Then:
\begin{enumerate}[label=(\roman*)]
\item $\rho(x)=L_{u^{-1}}\circ\bar\rho(x)\circ L_u$ for all $x\in\G$;
\item $\beta\mapsto u\beta$ is an isomorphism of complexes
$(C^\bullet_\rho,d_\rho)\to(C^\bullet_{\bar\rho},d_{\bar\rho})$;
\item $\Omega=u\,\omega$ is a symplectic form on the Lie--Rinehart algebra
$(\G,\bar\rho)$, and $X_a=X^\Omega_{ua}$ for all $a\in A$;
\item $a\mapsto ua$ is a Lie algebra isomorphism
$(A,\{\,,\}_\omega)\to(A,\{\,,\}_\Omega)$ from the Jacobi algebra of $\omega$
onto the Poisson algebra of $\Omega$.
\end{enumerate}
\end{proposition}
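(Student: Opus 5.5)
The plan is to verify (i)–(iv) in order; each rests on the single identity $u\chi=d_{\bar\rho}u$, i.e.\ $\bar\rho(x)(u)=u\chi(x)$.

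For (i), fix $a\in A$. Since $\bar\rho(x)$ is a derivation,
\[
u^{-1}\bar\rho(x)(ua)=u^{-1}\bigl(\bar\rho(x)(u)\,a+u\,\bar\rho(x)(a)\bigr)=\chi(x)a+\bar\rho(x)(a)=\rho(x)(a).
\]

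For (ii), I will use the Cartan-type formula \eqref{drho}, which is written for any anchor. Multiplication by $u$ is $A$-linear and bijective on each $C^p$, with inverse multiplication by $u^{-1}$. It therefore suffices to check $d_{\bar\rho}(u\beta)=u\,d_\rho\beta$. In the first sum of \eqref{drho} for $d_{\bar\rho}(u\beta)$, the terms are $\bar\rho(x_i)\bigl(u\,\beta(\ldots)\bigr)$. By (i), each of these equals $u\,\rho(x_i)\beta(\ldots)$. The bracket sum is $A$-linear in $\beta$, so it simply picks up the factor $u$. Alternatively, one can use Proposition~\ref{prop:complexe}:
\[
d_{\bar\rho}(u\beta)=d_{\bar\rho}u\wedge\beta+u\,d_{\bar\rho}\beta=u(\chi\wedge\beta+d_{\bar\rho}\beta)=u\,d_\rho\beta.
\]
I would present this second version, since it is a one-line argument.

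For (iii), closedness follows from (ii): $d_{\bar\rho}\Omega=u\,d_\rho\omega=0$. Nondegeneracy holds because $\Omega^\flat=L_u\circ\omega^\flat$ is a composition of isomorphisms, $u$ being a unit. For the gradients,
\[
i_{X_a}\Omega=u\,i_{X_a}\omega=u\,d_\rho a=d_{\bar\rho}(ua),
\]
where the last equality is (ii) in degree $0$. Hence $X_a=X^\Omega_{ua}$ by injectivity of $\Omega^\flat$.

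For (iv), compute $\{ua,ub\}_\Omega=-\Omega(X^\Omega_{ua},X^\Omega_{ub})=-u\,\omega(X_a,X_b)=u\{a,b\}_\omega$. So $a\mapsto ua$ is a bijective bracket morphism. The bracket $\{\,,\}_\Omega$ is Poisson by Proposition~\ref{prop:xone}, since the cocycle of $(\G,\bar\rho)$ viewed as an LRJ algebra is $\bar\rho(\cdot)(1_A)=0$.

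The only point requiring care is the sign and placement of $u$ versus $u^{-1}$ in (i) and (ii). Everything else is bookkeeping, so I expect no genuine obstacle.
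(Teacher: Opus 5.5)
Your proposal is correct and follows the paper's proof essentially step by step: (i) via the derivation property of $\bar\rho(x)$, (ii) via the graded-derivation identity $d_{\bar\rho}(u\beta)=d_{\bar\rho}u\wedge\beta+u\,d_{\bar\rho}\beta$, and (iii) via $i_{X_a}\Omega=u\,d_\rho a=d_{\bar\rho}(ua)$. The only deviation is minor and in (iv): you compute directly from $\{a,b\}=-\omega(X_a,X_b)$ instead of expanding $\bar\rho(X_a)(ub)$ as the paper does, and you also state explicitly, via Proposition~\ref{prop:xone} with zero cocycle $\bar\rho(\cdot)(1_A)=0$, why $\{\,,\}_\Omega$ is Poisson, which the paper leaves implicit.
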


\begin{proof}
(i) $u^{-1}\bar\rho(x)(uf)=u^{-1}\bar\rho(x)(u)f+\bar\rho(x)(f)
=\chi(x)f+\bar\rho(x)(f)=\rho(x)(f)$.
(ii) Since $d_{\bar\rho}$ is a graded derivation,
$d_{\bar\rho}(u\beta)=d_{\bar\rho}u\wedge\beta+u\,d_{\bar\rho}\beta
=u(\chi\wedge\beta+d_{\bar\rho}\beta)=u\,d_\rho\beta$.
(iii) By (ii), $d_{\bar\rho}\Omega=0$, and $\Omega^\flat=u\,\omega^\flat$ is
bijective. Moreover $i_{X_a}\Omega=u\,d_\rho a=d_{\bar\rho}(ua)$.
(iv) By (iii) and Proposition~\ref{prop:okassa},
$\{ua,ub\}_\Omega=\bar\rho(X_a)(ub)=\bar\rho(X_a)(u)b+u\bar\rho(X_a)(b)
=u\rho(X_a)(b)=u\{a,b\}_\omega$.
\end{proof}

\begin{remark}\label{rem:conforme}
The relevant invariant is thus the class of $\chi$ modulo the logarithmic
 derivatives $u^{-1}d_{\bar\rho}u$ of units of $A$. When $A=R[t,t^{-1}]$,
with $R$ a domain, the units are of the form $\lambda t^n$, where
$\lambda\in R^\times$ and $n\in\mathbb Z$. Hence $t^{-1}dt$ is conformally
trivial. More generally, if $K$ has characteristic zero and
$c\notin\mathbb Z\cdot1_K$, then $c\,t^{-1}dt$ is not the logarithmic
derivative of a unit; consequently it is not conformally trivial (see
Example~\ref{ex:jacobi}).
\end{remark}

\section{Cocyclic contactification}\label{sec:contact}

Let $(\G,\rho,\omega)$ be a symplectic LRJ algebra. Let
$\Gh_\omega=\G\oplus Ae$, where $e$ generates a free summand of rank one.
For $x,y\in\G$ and $f,g\in A$, set
\begin{align}
[x+fe,y+ge]_\omega&=[x,y]+\bigl(\rho(x)(g)-\rho(y)(f)+\omega(x,y)\bigr)e,
\label{hatbracket}\\
\hr(x+fe)&=\rho(x),\qquad \pi(x+fe)=x,\qquad \eta(x+fe)=f.\label{hatrho}
\end{align}

\begin{theorem}\label{thm:extension}
Formulas \eqref{hatbracket}--\eqref{hatrho} make $(\Gh_\omega,\hr)$ an LRJ
algebra over $A$, and
\[
0\longrightarrow Ae\longrightarrow\Gh_\omega\xrightarrow{\ \pi\ }\G
\longrightarrow0
\]
an exact sequence of LRJ algebra morphisms. The ideal $Ae$ is abelian and
\begin{equation}\label{xe}
[x,ae]_\omega=\rho(x)(a)\,e,\qquad\text{in particular}\qquad
[x,e]_\omega=\chi(x)\,e .
\end{equation}
\end{theorem}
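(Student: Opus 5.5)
We check the LRJ axioms for $(\Gh_\omega,\hr)$ directly, in the order: bilinearity and skew-symmetry, $A$-linearity of $\hr$, the Leibniz-type rule \eqref{LRJ}, the morphism property of $\hr$, and finally the Jacobi identity. The Jacobi identity is the main step.

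The bracket \eqref{hatbracket} is visibly $K$-bilinear and skew-symmetric, because $\omega$ is alternating. The anchor $\hr(x+fe)=\rho(x)$ is $A$-linear since $\rho$ is.

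For \eqref{LRJ}, take $X=x+fe$, $Y=y+ge$ and $a\in A$, so that $aY=ay+age$. We compute
\[
[X,aY]_\omega=[x,ay]+\bigl(\rho(x)(ag)-\rho(ay)(f)+\omega(x,ay)\bigr)e .
\]
The $\G$-component is $a[x,y]+\bar\rho(x)(a)y$ by \eqref{LRJ} for $\G$. In the $e$-component, \eqref{module} gives $\rho(x)(ag)=\bar\rho(x)(a)g+a\rho(x)(g)$, while $\rho(ay)(f)=a\rho(y)(f)$ and $\omega(x,ay)=a\omega(x,y)$. The total is therefore $a[X,Y]_\omega+\bar\rho(x)(a)Y$. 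Since $\rho(x)(a)-a\rho(x)(1_A)=\bar\rho(x)(a)$, this is exactly \eqref{LRJ} for $\hr$.

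The morphism property $\hr([X,Y]_\omega)=\rho([x,y])=[\rho(x),\rho(y)]=[\hr X,\hr Y]$ is immediate.

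For the Jacobi identity, the $\G$-component of $[X,[Y,Z]]$ is $[x,[y,z]]$, so that part follows from Jacobi in $\G$. The $e$-component of $[X,[Y,Z]_\omega]_\omega$ is
\[
\rho(x)\bigl(\rho(y)h-\rho(z)g+\omega(y,z)\bigr)-\rho([y,z])(f)+\omega(x,[y,z]),
\]
where $Z=z+he$. We take the cyclic sum over $(X,Y,Z)$ and sort the terms into three groups.

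\begin{itemize}
\item The terms $\rho(x)\rho(y)h-\rho(y)\rho(x)h-\rho([x,y])h$, together with their cyclic analogues, vanish because $\rho$ is a Lie morphism.
\item The terms involving only $\omega$ are $\sum_{\mathrm{cyc}}\bigl(\rho(x)\omega(y,z)-\omega([x,y],z)\bigr)$, up to the signs coming from $\omega(x,[y,z])=-\omega([y,z],x)$. Comparing with \eqref{drho} for $p=2$, this sum equals $(d_\rho\omega)(x,y,z)$, which is $0$.
\end{itemize}

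Keeping the signs straight in this matching is the only real bookkeeping. Thus $\Gh_\omega$ is a Lie algebra, in fact an abelian extension of $\G$ by the module $(A,\rho)$ determined by the cocycle $\omega$.

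It remains to treat the sequence and the formulas \eqref{xe}. The inclusion $Ae\to\Gh_\omega$ and the projection $\pi$ are $A$-linear and bracket preserving. This is clear for $\pi$. For the inclusion, $[fe,ge]_\omega=0$, which also shows that $Ae$ is abelian. Both maps intertwine the anchors, with $Ae$ carrying the zero anchor, because $\hr(fe)=\rho(0)=0$. Exactness of the sequence is evident from the direct sum decomposition. Finally, taking $x+0e$ and $0+ae$ in \eqref{hatbracket} gives $[x,ae]_\omega=\rho(x)(a)e$, and the case $a=1_A$ yields $[x,e]_\omega=\chi(x)e$.
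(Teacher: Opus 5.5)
Your proof is correct and is essentially the paper's argument: the rule \eqref{LRJ} for $\hr$ follows from \eqref{LRJ} in $\G$ together with \eqref{module}. In the Jacobi identity, the $f,g,h$-terms cancel because $\rho$ is a Lie morphism, and the pure $\omega$-terms assemble into $d_\rho\omega=0$. The differences are cosmetic: expanding $[X,[Y,Z]_\omega]_\omega$ instead of $[[u,v]_\omega,w]_\omega$ yields $+(d_\rho\omega)(x,y,z)$ rather than the paper's $-(d_\rho\omega)(x,y,z)$, and you announce three groups of terms but list two, since the $f$- and $g$-terms are the cyclic analogues of the $h$-terms.
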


\begin{proof}
The bracket is $K$-bilinear and skew-symmetric; $\hr$ is $A$-linear with
values in $\Diff^1_K(A)$.

\emph{Jacobi identity.} Let $u=x+fe$, $v=y+ge$, $w=z+he$ and
$\phi(u,v)=\rho(x)(g)-\rho(y)(f)+\omega(x,y)$. The $\G$-component of the
Jacobiator is that of $x,y,z$, hence zero. The vertical component of
$[[u,v]_\omega,w]_\omega$ is
$\rho([x,y])(h)-\rho(z)\phi(u,v)+\omega([x,y],z)$. In the cyclic sum, the
terms containing $h$ are
\[
\rho([x,y])(h)-\rho(x)\rho(y)(h)+\rho(y)\rho(x)(h)=0,
\]
since $\rho$ is a Lie algebra morphism; by cyclic permutation the same
holds for the terms in $f$ and in $g$. The remaining terms are
\begin{multline*}
-\rho(x)\omega(y,z)-\rho(y)\omega(z,x)-\rho(z)\omega(x,y)\\
+\omega([x,y],z)+\omega([y,z],x)+\omega([z,x],y)
=-(d_\rho\omega)(x,y,z)=0.
\end{multline*}

\emph{Anchor.} $\hr([u,v]_\omega)=\rho([x,y])=[\hr(u),\hr(v)]$.

\emph{Rule \eqref{LRJ}.} For $a\in A$, using \eqref{LRJ} in $\G$ and
\eqref{module},
\begin{align*}
[u,av]_\omega
&=a[x,y]+\bar\rho(x)(a)y\\
&\quad+\bigl(\bar\rho(x)(a)g+a\rho(x)(g)-a\rho(y)(f)+a\,\omega(x,y)\bigr)e\\
&=a[u,v]_\omega+\bigl(\hr(u)(a)-a\,\hr(u)(1_A)\bigr)v .
\end{align*}
Finally, $\pi$ preserves brackets and anchors, $[fe,ge]_\omega=0$, and
\eqref{xe} is the case $f=0$, $y=0$ of \eqref{hatbracket}.
\end{proof}

\begin{remark}
Relation \eqref{xe} shows that $e$ commutes with $\G$ if and only if
$\chi=0$, but that the submodule $Ae$ is never central as soon as $\rho$ is
nonzero. In the situation of Proposition~\ref{prop:conforme}, the element
$u^{-1}e$ satisfies $[x,u^{-1}e]_\omega=\rho(x)(u^{-1})e=0$: the
non-centrality of $e$ is meaningful only up to conformal change.
\end{remark}

\begin{proposition}\label{prop:deta}
$d_{\hr}\eta=-\pi^*\omega$.
\end{proposition}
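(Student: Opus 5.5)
We compute $d_{\hr}\eta$ directly from the degree-one formula \eqref{d01}, applied to the LRJ algebra $(\Gh_\omega,\hr)$ of Theorem~\ref{thm:extension}. First we note that $\eta$ is $A$-linear, so it lies in $C^1_{\hr}(\Gh_\omega,A)$. For $u=x+fe$ and $v=y+ge$ we then have
\[
(d_{\hr}\eta)(u,v)=\hr(u)\bigl(\eta(v)\bigr)-\hr(v)\bigl(\eta(u)\bigr)-\eta\bigl([u,v]_\omega\bigr).
\]

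We evaluate each of the three terms in turn. By \eqref{hatrho}, $\hr(u)(\eta(v))=\rho(x)(g)$ and $\hr(v)(\eta(u))=\rho(y)(f)$. By \eqref{hatbracket}, the $e$-component of $[u,v]_\omega$ is $\rho(x)(g)-\rho(y)(f)+\omega(x,y)$, so
\[
\eta\bigl([u,v]_\omega\bigr)=\rho(x)(g)-\rho(y)(f)+\omega(x,y).
\]
Substituting, the terms in $f$ and $g$ cancel pairwise and we are left with
\[
(d_{\hr}\eta)(u,v)=-\omega(x,y)=-\omega(\pi u,\pi v)=-(\pi^*\omega)(u,v).
\]

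There is no real obstacle here. The only points to watch are the sign conventions in \eqref{d01}, which are built into the bracket \eqref{hatbracket}, and the fact that the twisting by $\chi$ is already absorbed into $\rho$. Because of this, no additional term $\chi\wedge\eta$ needs to be tracked separately.
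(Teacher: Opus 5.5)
Your proposal is correct and is the paper's proof: you evaluate $d_{\hr}\eta$ on $u=x+fe$, $v=y+ge$ with the degree-one formula \eqref{d01}, and the terms $\rho(x)(g)-\rho(y)(f)$ cancel against the vertical component of $[u,v]_\omega$, leaving $-\omega(x,y)$. Your explicit check that $\eta$ is $A$-linear, so that $\eta\in C^1_{\hr}(\Gh_\omega,A)$, is a small detail the paper leaves implicit.
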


\begin{proof}
By \eqref{d01}, for $u=x+fe$, $v=y+ge$:
$(d_{\hr}\eta)(u,v)=\rho(x)(g)-\rho(y)(f)-\eta([u,v]_\omega)=-\omega(x,y)$.
\end{proof}

\begin{proposition}[Dependence on the class of $\omega$]\label{prop:classe}
Let $\theta\in C^1_\rho(\G,A)$ and $\omega'=\omega+d_\rho\theta$. The map
\[
\Phi_\theta:\Gh_{\omega'}\longrightarrow\Gh_\omega,\qquad
\Phi_\theta(x+fe)=x+\bigl(f+\theta(x)\bigr)e,
\]
is an isomorphism of LRJ algebras satisfying $\pi\circ\Phi_\theta=\pi$ and
\[
\Phi_\theta^*\eta_\omega=\eta_{\omega'}+\pi^*\theta .
\]
In particular, if $\omega=d_\rho\theta$, then $\Gh_\omega$ is isomorphic to
the split extension $\Gh_0=\G\ltimes A$, and $\eta_\omega$ corresponds to
the form $\eta_0-\pi^*\theta$.
\end{proposition}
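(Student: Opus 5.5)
The plan is to verify directly that $\Phi_\theta$ is $A$-linear and bijective, that it preserves brackets and anchors, and that it satisfies the stated identities. These points are all short formula checks once the definitions \eqref{hatbracket}--\eqref{hatrho} are unfolded.

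First, $\Phi_\theta$ is $A$-linear because $\theta$ is $A$-linear. Its inverse is $\Phi_{-\theta}$ in the form $x+fe\mapsto x+(f-\theta(x))e$, so it is bijective. Next, $\pi\circ\Phi_\theta=\pi$ is immediate, since $\Phi_\theta$ does not change the $\G$-component. Compatibility with anchors follows from this, because $\hr(x+fe)=\rho(x)$ depends only on $\pi$. Finally, the formula for the canonical form is a one-line evaluation: $\eta_\omega(\Phi_\theta(x+fe))=f+\theta(x)=\eta_{\omega'}(x+fe)+(\pi^*\theta)(x+fe)$.

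The only substantive step is bracket preservation, which I would check componentwise. Take $u=x+fe$ and $v=y+ge$. The $\G$-components of $\Phi_\theta[u,v]_{\omega'}$ and $[\Phi_\theta u,\Phi_\theta v]_\omega$ are both $[x,y]$. For the $e$-components, the first expression gives
\[
\rho(x)(g)-\rho(y)(f)+\omega'(x,y)+\theta([x,y]),
\]
while the second gives
\[
\rho(x)(g+\theta(y))-\rho(y)(f+\theta(x))+\omega(x,y).
\]
Substituting $\omega'=\omega+d_\rho\theta$ and using \eqref{d01}, namely $(d_\rho\theta)(x,y)=\rho(x)\theta(y)-\rho(y)\theta(x)-\theta([x,y])$, the $\theta([x,y])$ terms cancel and the two components agree. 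This sign bookkeeping is the main point to watch; no other obstacle is expected.

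For the final assertion, take $\omega=d_\rho\theta$, that is, $0+d_\rho\theta$. Applying the isomorphism with $\omega$ in the role of $\omega'$ and $0$ in the role of $\omega$ gives $\Phi_\theta:\Gh_\omega\to\Gh_0$, and $\Gh_0$ is the split semidirect product $\G\ltimes A$ by \eqref{hatbracket}. The identity $\Phi_\theta^*\eta_0=\eta_\omega+\pi^*\theta$ then shows that, under this identification, $\eta_\omega$ corresponds to $\eta_0-\pi^*\theta$. Here one uses $\pi\circ\Phi_\theta^{-1}=\pi$ when transporting $\pi^*\theta$.
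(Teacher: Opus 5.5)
Your proposal is correct and matches the paper's proof essentially step for step. Both check $A$-linearity, take $\Phi_{-\theta}$ as the inverse, get anchor compatibility from $\pi\circ\Phi_\theta=\pi$, reduce bracket preservation to $(\omega'-\omega-d_\rho\theta)(x,y)=0$ on the vertical components, evaluate $\eta_\omega(\Phi_\theta(x+fe))=f+\theta(x)$, and obtain the split case by taking $0$ in the role of $\omega$ and $d_\rho\theta$ in the role of $\omega'$.
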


\begin{proof}
$\Phi_\theta$ is $A$-linear, bijective with inverse $\Phi_{-\theta}$, and
preserves anchors. The difference between the vertical components of
$\Phi_\theta([u,v]_{\omega'})$ and $[\Phi_\theta u,\Phi_\theta v]_\omega$ is
\[
\omega'(x,y)+\theta([x,y])-\omega(x,y)-\rho(x)\theta(y)+\rho(y)\theta(x)
=(\omega'-\omega-d_\rho\theta)(x,y)=0 .
\]
Finally $\eta_\omega(\Phi_\theta(x+fe))=f+\theta(x)$. For the last
statement, take $\omega=0$ and $\omega'=d_\rho\theta$.
\end{proof}

Thus the isomorphism class of the extension depends only on
$[\omega]\in H^2_\rho(\G,A)$, whereas the form $\eta$ depends on more; in
the exact case one recovers the analogue of the classical model
$(M\times\mathbb R,dt-\theta)$.

\begin{definition}
Let $\mathcal H$ be an LRJ algebra whose underlying $A$-module is finitely
generated projective of constant rank $2m+1$. A form $\eta\in\mathcal H^*$
is a \emph{contact form} if $\eta\wedge(d\eta)^m$ generates the invertible
$A$-module $\Lambda^{2m+1}_A\mathcal H^*$. A \emph{Reeb element} is an
$R\in\mathcal H$ such that $\eta(R)=1$ and $i_Rd\eta=0$.
\end{definition}

\begin{proposition}\label{prop:contactform}
Assume $\G$ is finitely generated projective of constant rank $2m$. Then
$\eta$ is a contact form on $\Gh_\omega$, with
$\eta\wedge(d_{\hr}\eta)^m=(-1)^m\eta\wedge\pi^*(\omega^m)$, and $e$ is its
unique Reeb element.
\end{proposition}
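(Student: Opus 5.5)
The proof has three parts: the identity for $d_{\hr}\eta$, the top-degree identity, and the existence and uniqueness of the Reeb element.

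\textbf{Step 1: the identity.} By Proposition~\ref{prop:deta}, $d_{\hr}\eta=-\pi^*\omega$. Since $\pi^*$ is an algebra morphism for the exterior product (it is just substitution), we get $(d_{\hr}\eta)^m=(-1)^m\pi^*(\omega^m)$. This is exactly the stated formula for $\eta\wedge(d_{\hr}\eta)^m$.

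\textbf{Step 2: the generating property.} Since $\G$ is finitely generated projective of rank $2m$, so is $\G^*$. Moreover $\Gh_\omega=\G\oplus Ae$ is projective of rank $2m+1$, and
\[
\Gh_\omega^*=\pi^*\G^*\oplus A\eta,
\]
because $\eta(e)=1$ and $\eta$ vanishes on $\G$. Hence
\[
\Lambda^{2m+1}_A\Gh_\omega^*\cong A\eta\otimes\Lambda^{2m}_A\G^*,
\]
via $\eta\otimes\alpha\mapsto\eta\wedge\pi^*\alpha$. It therefore suffices to show that $\omega^m$ generates $\Lambda^{2m}_A\G^*$.

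\textbf{Step 3: the main obstacle.} The hard point is that $\omega^\flat$ being an isomorphism implies $\omega^m$ is a generator. I would check this locally. Generation of an invertible module is local: it suffices to check that $\omega^m$ is a generator after localizing at every prime $\mathfrak p$, and in fact after passing to residue fields by Nakayama's lemma, since the module is invertible. Both $\omega^\flat$ and $\omega^m$ commute with base change. Over the field $k(\mathfrak p)$, $\omega$ becomes a nondegenerate alternating form on a $2m$-dimensional space. It then has a Darboux basis, so $\omega^m$ is a nonzero multiple of the volume form, namely $m!$ times it with the factorial-free normalization. Here I use characteristic zero, so that $m!\neq0$ in $K\subset k(\mathfrak p)$; the residue fields are $K$-algebras and so also have characteristic zero. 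Thus $\omega^m$ does not vanish in any fibre, hence generates.

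\textbf{Step 4: the Reeb element.} First, $\eta(e)=1$. Also $i_e\,d_{\hr}\eta=-i_e\pi^*\omega=-\pi^*i_{\pi(e)}\omega=0$ by Lemma~\ref{lem:pullback}, since $\pi(e)=0$. For uniqueness, let $R=x+fe$ be a Reeb element. Then $\eta(R)=1$ gives $f=1$. Next,
\[
0=i_R\,d_{\hr}\eta=-\pi^*(i_x\omega),
\]
and $\pi^*$ is injective because $\pi$ is surjective. So $i_x\omega=0$, and the injectivity of $\omega^\flat$ gives $x=0$. Hence $R=e$.
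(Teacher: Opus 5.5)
Your proof is correct and follows the paper's argument: Proposition~\ref{prop:deta} gives the formula, $\Lambda^{2m+1}\Gh_\omega^*\cong\Lambda^{2m}\G^*\otimes_A A\eta$ reduces the claim to $\omega^m$ generating $\Lambda^{2m}\G^*$, a local Darboux computation gives $\omega^m=m!\,e^1\wedge f^1\wedge\cdots\wedge e^m\wedge f^m$ with $m!$ invertible, and uniqueness of the Reeb element follows from injectivity of $\pi^*$ and $\omega^\flat$. The only variation is that you pass to residue fields via Nakayama's lemma (valid because $\omega^\flat$ stays an isomorphism under base change, $\G$ being finitely generated projective), whereas the paper uses a symplectic basis directly over each local ring $A_{\mathfrak p}$; both work.
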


\begin{proof}
For every prime ideal $\mathfrak p$ of $A$, the $A_{\mathfrak p}$-module
$\G_{\mathfrak p}$ is free of rank $2m$ and $\omega_{\mathfrak p}$ is
nondegenerate on it; a nondegenerate alternating form on a finitely
generated free module over a local ring admits a symplectic basis
$(e_1,f_1,\ldots,e_m,f_m)$. In the dual basis,
$\omega^m_{\mathfrak p}=m!\,e^1\wedge f^1\wedge\cdots\wedge e^m\wedge f^m$,
which generates $\Lambda^{2m}\G^*_{\mathfrak p}$ since $m!$ is invertible.
Hence $\omega^m$ generates $\Lambda^{2m}\G^*$. Since $\eta(e)=1$ and
$\eta|_\G=0$, $\eta\wedge\pi^*(\omega^m)$ generates
$\Lambda^{2m+1}\Gh_\omega^*\cong\Lambda^{2m}\G^*\otimes_A(Ae)^*$. The formula
follows from Proposition~\ref{prop:deta}.

We have $\eta(e)=1$ and $i_ed_{\hr}\eta=-i_e\pi^*\omega=0$ since $\pi(e)=0$.
If $R=x+fe$ is a Reeb element, then $\pi^*(i_x\omega)=0$, hence $x=0$ by
injectivity of $\pi^*$ and $\omega^\flat$, and then $f=1$.
\end{proof}

\section{Strict lifts}\label{sec:strict}

The results of this section and the next two use only the bijectivity of
$\omega^\flat$; the hypotheses of Proposition~\ref{prop:contactform} only
serve to justify the ``contact'' terminology.

\begin{definition}
An element $u\in\Gh_\omega$ is a \emph{strict symmetry} of $\eta$ if
$\Lie^{\hr}_u\eta=0$. We denote by $\Cont(\Gh_\omega,\eta)$ the set of these
elements; by Lemma~\ref{lem:cartan} it is a Lie subalgebra of $\Gh_\omega$.
\end{definition}

\begin{lemma}[Lifting formula]\label{lem:lift}
For $u=x+fe\in\Gh_\omega$,
\begin{equation}\label{Leta}
\Lie^{\hr}_{x+fe}\eta=\pi^*\bigl(d_\rho f-i_x\omega\bigr).
\end{equation}
\end{lemma}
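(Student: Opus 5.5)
The plan is to use Cartan's formula $\Lie^{\hr}_u\eta=i_u d_{\hr}\eta+d_{\hr}(i_u\eta)$ on $\Gh_\omega$ and compute the two terms separately. Lemma~\ref{lem:cartan} applies to any LRJ algebra, and $(\Gh_\omega,\hr)$ is one by Theorem~\ref{thm:extension}.

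For the first term I use Proposition~\ref{prop:deta}, which gives $d_{\hr}\eta=-\pi^*\omega$. Then Lemma~\ref{lem:pullback} applies to the LRJ morphism $\pi$ (Theorem~\ref{thm:extension}), so
\[
i_u\pi^*\omega=\pi^*(i_{\pi(u)}\omega)=\pi^*(i_x\omega).
\]
Hence the first term is $i_ud_{\hr}\eta=-\pi^*(i_x\omega)$.

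For the second term, $i_u\eta=\eta(x+fe)=f\in A$. I compare $d_{\hr}f$ with $\pi^*(d_\rho f)$ directly from \eqref{d01}: for $v=y+ge$ we have $(d_{\hr}f)(v)=\hr(v)(f)=\rho(y)(f)=(d_\rho f)(\pi v)$. Equivalently, this is Lemma~\ref{lem:pullback} in degree zero, since $\hr=\rho\circ\pi$. So $d_{\hr}(i_u\eta)=\pi^*(d_\rho f)$.

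Adding the two terms gives \eqref{Leta}.

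There is no real obstacle. The only point that needs care is that the anchor of $\Gh_\omega$ factors through $\pi$, which makes $\pi^*$ commute with the differentials. As a sanity check, I would also verify the identity with the explicit formula of Lemma~\ref{lem:cartan}: $(\Lie^{\hr}_u\eta)(v)=\rho(x)(g)-\eta([u,v]_\omega)=\rho(y)(f)-\omega(x,y)$, and this agrees with \eqref{Leta}.
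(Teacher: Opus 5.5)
Your proof is correct and follows essentially the same route as the paper: you expand $\Lie^{\hr}_u\eta=i_ud_{\hr}\eta+d_{\hr}(\eta(u))$, use Proposition~\ref{prop:deta} together with Lemma~\ref{lem:pullback} for the first term, and identify $d_{\hr}f=\pi^*(d_\rho f)$ from $\hr=\rho\circ\pi$. Your explicit check via the formula of Lemma~\ref{lem:cartan}, which gives $\rho(y)(f)-\omega(x,y)$, is a valid independent confirmation but is not needed.
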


\begin{proof}
By Proposition~\ref{prop:deta},
$\Lie^{\hr}_u\eta=i_ud_{\hr}\eta+d_{\hr}(\eta(u))=-\pi^*(i_x\omega)+d_{\hr}f$,
and $d_{\hr}f=\pi^*(d_\rho f)$ by Lemma~\ref{lem:pullback}.
\end{proof}

\begin{theorem}\label{thm:strict}
The element $x+fe$ is a strict symmetry of $\eta$ if and only if $x=X_f$.
Thus $\Cont(\Gh_\omega,\eta)=\{X_f+fe:f\in A\}$.
\end{theorem}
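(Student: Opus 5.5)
The plan is to read the statement directly off the lifting formula of Lemma~\ref{lem:lift}. For $u=x+fe$ we have
\[
\Lie^{\hr}_{u}\eta=\pi^*\bigl(d_\rho f-i_x\omega\bigr),
\]
so $u$ is a strict symmetry if and only if $\pi^*(d_\rho f-i_x\omega)=0$ in $C^1_{\hr}(\Gh_\omega,A)$. The argument therefore reduces to two short steps: showing that $\pi^*$ is injective on $1$-forms, and then using the injectivity of $\omega^\flat$.

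For the first step, note that $\pi:\Gh_\omega\to\G$ is surjective; indeed, it restricts to the identity on the summand $\G\subset\Gh_\omega$. Given $\beta\in C^1_\rho(\G,A)$ with $\pi^*\beta=0$, we get $\beta(x)=(\pi^*\beta)(x+0e)=0$ for every $x\in\G$, hence $\beta=0$. This is the same fact already used implicitly in the proof of Proposition~\ref{prop:contactform}.

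Applying this with $\beta=d_\rho f-i_x\omega$, strictness becomes equivalent to $i_x\omega=d_\rho f$. By the definition \eqref{grad} of the symplectic gradient, $d_\rho f=i_{X_f}\omega$, so the condition reads $\omega^\flat(x)=\omega^\flat(X_f)$. Injectivity of $\omega^\flat$ then gives $x=X_f$. Conversely, if $x=X_f$, both sides vanish and \eqref{Leta} yields $\Lie^{\hr}_u\eta=0$.

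For the set-theoretic description, every element of $\Gh_\omega$ can be written uniquely as $x+fe$. The strict symmetries are therefore exactly the elements $X_f+fe$ with $f\in A$ arbitrary, which gives $\Cont(\Gh_\omega,\eta)=\{X_f+fe:f\in A\}$.

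No step is a real obstacle. The only point requiring care is the injectivity of $\pi^*$, which follows from the splitting $\Gh_\omega=\G\oplus Ae$. Only the bijectivity (in fact, just the injectivity) of $\omega^\flat$ is used, consistent with the remark at the start of the section.
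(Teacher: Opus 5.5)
Your proposal is correct and is essentially the paper's own proof: you use the lifting formula \eqref{Leta}, injectivity of $\pi^*$ coming from surjectivity of $\pi$, and injectivity of $\omega^\flat$ to get $i_x\omega=d_\rho f\iff x=X_f$. One small correction to your closing remark: surjectivity of $\omega^\flat$ is also used, because it is what makes $X_f$ exist for every $f\in A$ and hence makes the description $\{X_f+fe:f\in A\}$ meaningful, so ``just the injectivity'' slightly overstates the case.
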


\begin{proof}
Since $\pi$ is surjective, $\pi^*$ is injective; by \eqref{Leta}, the
condition is equivalent to $i_x\omega=d_\rho f$, that is, to $x=X_f$.
\end{proof}

\begin{theorem}\label{thm:jaclift}
The map $\mathcal J:A\to\Gh_\omega$, $\mathcal J(f)=X_f+fe$, is $K$-linear,
injective, and satisfies
\begin{equation}\label{morphism}
[\mathcal J(f),\mathcal J(g)]_\omega=\mathcal J(\{f,g\}).
\end{equation}
It is therefore an isomorphism from the Lie algebra $(A,\{\,,\})$ onto
$\Cont(\Gh_\omega,\eta)$.
\end{theorem}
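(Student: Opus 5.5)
Linearity of $\mathcal J$ comes first. The map $a\mapsto X_a$ is $K$-linear, because $d_\rho$ is $K$-linear and $\omega^\flat$ is a bijection; hence $\mathcal J$ is $K$-linear. Injectivity is immediate: if $X_f+fe=0$, then the $e$-component $f=\eta(\mathcal J(f))$ vanishes. By Theorem~\ref{thm:strict}, the image of $\mathcal J$ is exactly $\Cont(\Gh_\omega,\eta)$. It therefore only remains to verify the bracket relation \eqref{morphism}, after which $\mathcal J$ is a bijective Lie algebra morphism onto its image.

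For \eqref{morphism}, I expand the left-hand side with \eqref{hatbracket}:
\[
[X_f+fe,X_g+ge]_\omega=[X_f,X_g]+\bigl(\rho(X_f)(g)-\rho(X_g)(f)+\omega(X_f,X_g)\bigr)e .
\]
By Proposition~\ref{prop:okassa}(ii), the $\G$-component is $[X_f,X_g]=X_{\{f,g\}}$. For the vertical component I use the same proposition and the definition of the bracket:
\begin{itemize}[label=--]
\item $\rho(X_f)(g)=\{f,g\}$;
\item $\rho(X_g)(f)=\{g,f\}=-\{f,g\}$;
\item $\omega(X_f,X_g)=-\{f,g\}$.
\end{itemize}
The coefficient of $e$ is then $\{f,g\}+\{f,g\}-\{f,g\}=\{f,g\}$, so the bracket equals $X_{\{f,g\}}+\{f,g\}e=\mathcal J(\{f,g\})$.

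As a consistency check, closure of $\Cont$ under the bracket (Lemma~\ref{lem:cartan}) together with Theorem~\ref{thm:strict} already forces the bracket to have the form $\mathcal J(h)$ with $h=\eta([\mathcal J f,\mathcal J g]_\omega)$. The computation above identifies $h=\{f,g\}$.

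The only real obstacle is keeping the sign conventions straight. These are $\omega(X_a,y)=\rho(y)(a)$ and $\{a,b\}=-\omega(X_a,X_b)=\rho(X_a)(b)$, and the three vertical terms must combine to $+\{f,g\}$ rather than $3\{f,g\}$ or $-\{f,g\}$. I would double-check this by re-deriving each term directly from \eqref{grad}.
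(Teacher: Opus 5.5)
Your proposal is correct and follows the paper's proof essentially step by step. You read injectivity off the $e$-component, obtain surjectivity onto $\Cont(\Gh_\omega,\eta)$ from Theorem~\ref{thm:strict}, and compute the bracket with \eqref{hatbracket} and Proposition~\ref{prop:okassa}(ii), where the vertical terms are $\{f,g\}+\{f,g\}-\{f,g\}$ exactly as in the paper.
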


\begin{proof}
Injectivity is read off the vertical component. By \eqref{hatbracket} and
Proposition~\ref{prop:okassa}, the horizontal component of
$[\mathcal J(f),\mathcal J(g)]_\omega$ is $[X_f,X_g]=X_{\{f,g\}}$ and its
vertical component is
$\rho(X_f)(g)-\rho(X_g)(f)+\omega(X_f,X_g)=\{f,g\}+\{f,g\}-\{f,g\}$.
Surjectivity is Theorem~\ref{thm:strict}.
\end{proof}

\begin{corollary}\label{cor:central}
The projection $\pi$ induces a central extension of Lie algebras
\[
0\longrightarrow H^0_\rho\,e\longrightarrow\Cont(\Gh_\omega,\eta)
\xrightarrow{\ \pi\ }\ham\longrightarrow0,
\]
and, equivalently, $0\to H^0_\rho\to(A,\{\,,\})\to\ham\to0$, where the
second arrow is $a\mapsto X_a$.
\end{corollary}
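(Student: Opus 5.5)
The plan is to read the corollary off Theorems~\ref{thm:strict} and \ref{thm:jaclift}, using only the bijectivity of $\omega^\flat$. First, $\pi$ restricted to $\Cont(\Gh_\omega,\eta)$ is a Lie algebra morphism, since $\pi$ preserves brackets by Theorem~\ref{thm:extension}. By Theorem~\ref{thm:strict} its image is $\{X_f:f\in A\}=\ham$, so it is surjective onto $\ham$.

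Next, the kernel consists of the elements $X_f+fe$ with $X_f=0$. Since $\omega^\flat$ is injective, $X_f=0$ is equivalent to $d_\rho f=0$, that is, $f\in H^0_\rho$. The kernel is therefore $H^0_\rho\,e$.

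For centrality, I take $a\in H^0_\rho$, so that $X_a=0$ and $ae\in\Cont$, and an arbitrary strict symmetry $X_g+ge$. Formula \eqref{hatbracket} gives
\[
[ae,X_g+ge]_\omega=-\rho(X_g)(a)\,e .
\]
This vanishes because $\rho(y)(a)=0$ for every $y\in\G$ when $a\in H^0_\rho$. Alternatively, Theorem~\ref{thm:jaclift} identifies the bracket with $\mathcal J(\{a,g\})$, and $\{a,g\}=-\rho(X_g)(a)=0$. So the kernel is central.

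Finally, transport everything through the isomorphism $\mathcal J$ of Theorem~\ref{thm:jaclift}. Since $\pi\circ\mathcal J(a)=X_a$, the sequence becomes
\[
0\to H^0_\rho\to(A,\{\,,\})\to\ham\to0,
\]
where the second arrow is $a\mapsto X_a$, and $\mathcal J$ maps $H^0_\rho$ onto $H^0_\rho\,e$. Exactness in this form also follows directly: $a\mapsto X_a$ is a Lie morphism by Proposition~\ref{prop:okassa}(ii), it is surjective by the definition of $\ham$, and its kernel is $\{a:d_\rho a=0\}=H^0_\rho$.

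No step is a genuine obstacle. The only point needing care is checking that $H^0_\rho$ is an abelian ideal that is central, and this reduces to $\{a,\cdot\}=-\rho(X_\cdot)(a)=0$ for $a\in H^0_\rho$, as shown above.
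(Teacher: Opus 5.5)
Your proof is correct and follows essentially the paper's argument: surjectivity onto $\ham$ comes from $\pi(\mathcal J(f))=X_f$, the kernel $H^0_\rho\,e$ comes from $X_f=0\iff d_\rho f=0$ (injectivity of $\omega^\flat$), and centrality comes from the vanishing of $[\mathcal J(c),\mathcal J(g)]_\omega$. The only cosmetic difference is that you kill this bracket via $\rho(X_g)(c)=0$ (using $d_\rho c=0$), whereas the paper uses $\{c,g\}=\rho(X_c)(g)=0$ (using $X_c=0$); these express the same fact.
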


\begin{proof}
$\pi(\mathcal J(f))=X_f$, whence surjectivity onto $\ham$; the kernel
consists of the $\mathcal J(f)$ with $X_f=0$, that is, $d_\rho f=0$, and
then $\mathcal J(f)=fe$. For $c\in H^0_\rho$ we have $X_c=0$, hence
$\{c,g\}=\rho(X_c)(g)=0$ and $[\mathcal J(c),\mathcal J(g)]_\omega=0$.
\end{proof}

\begin{remark}
For $\chi=0$, $K\cdot1_A\subset H^0_\rho$ and one recovers the
Kostant--Souriau central extension. For $\chi\neq0$, $1_A\notin H^0_\rho$
and $\mathcal J(1_A)=X_{1_A}+e$ is not vertical; in
Example~\ref{ex:jacobi}, $H^0_\rho=0$ and $\pi$ is an isomorphism
$\Cont\to\ham$.
\end{remark}

\section{Cohomological obstruction}\label{sec:obstruction}

\begin{theorem}\label{thm:obstruction}
Let $x\in\G$. There exists $f\in A$ such that $x+fe$ is a strict symmetry
of $\eta$ if and only if $x\in\ham$. When $x\in\hamloc$, this is equivalent
to $[i_x\omega]=0$ in $H^1_\rho(\G,A)$. When they exist, the strict lifts of
$x$ form an affine space over $H^0_\rho$.
\end{theorem}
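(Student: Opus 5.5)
The plan is to read everything off Theorem~\ref{thm:strict} and the description \eqref{hamZB} of $\ham$ and $\hamloc$ through $\omega^\flat$.

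\emph{First assertion.} By Theorem~\ref{thm:strict}, $x+fe$ is a strict symmetry of $\eta$ if and only if $x=X_f$. So a strict lift of $x$ exists exactly when $x=X_f$ for some $f\in A$, that is, when $x\in\ham$ by definition.

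\emph{Second assertion.} Let $x\in\hamloc$. Then $d_\rho(i_x\omega)=\Lie^\rho_x\omega=0$, because $d_\rho\omega=0$. Hence $i_x\omega\in Z^1_\rho$ and its class $[i_x\omega]\in H^1_\rho(\G,A)$ is defined. By \eqref{hamZB}, $x\in\ham$ if and only if $i_x\omega\in B^1_\rho$. Concretely, $i_x\omega=d_\rho f$ for some $f$ exactly when $x=X_f$, by injectivity of $\omega^\flat$ and the definition \eqref{grad}. So the existence of a strict lift is equivalent to $[i_x\omega]=0$.

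\emph{Third assertion.} Suppose $x+fe$ and $x+ge$ are both strict lifts. Then $X_f=X_g=x$, so $i_x\omega=d_\rho f=d_\rho g$. It follows that $d_\rho(f-g)=0$, i.e.\ $f-g\in H^0_\rho$. Conversely, let $x+fe$ be a strict lift and $c\in H^0_\rho$. By \eqref{Leta},
\[
\Lie^{\hr}_{x+(f+c)e}\eta=\pi^*\bigl(d_\rho f+d_\rho c-i_x\omega\bigr)=0,
\]
so $x+(f+c)e$ is again a strict lift. Hence the set of strict lifts is $\{x+(f+c)e : c\in H^0_\rho\}$, an affine space over $H^0_\rho$, acting through $c\mapsto ce$.

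There is no real obstacle here. The only point needing care is to state explicitly that $i_x\omega$ is a cocycle for $x\in\hamloc$, which is what makes the cohomology class meaningful. After that, each step is an immediate consequence of the bijectivity of $\omega^\flat$ and the lifting formula \eqref{Leta}.
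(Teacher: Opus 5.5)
Your proposal is correct and follows essentially the same route as the paper, which reduces everything to the lifting formula \eqref{Leta} (strictness of $x+fe$ is equivalent to $i_x\omega=d_\rho f$) and observes that two such $f$ differ by an element of $\ker d_\rho=H^0_\rho$. Your version spells out two points the paper leaves implicit: that $i_x\omega$ is a cocycle for $x\in\hamloc$, and that adding any $c\in H^0_\rho$ to a strict lift gives another strict lift.
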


\begin{proof}
By Lemma~\ref{lem:lift}, $x+fe$ is strict if and only if
$i_x\omega=d_\rho f$. If $f$ and $f'$ both work, then $d_\rho(f'-f)=0$.
\end{proof}

\begin{theorem}\label{thm:exact}
The map $\Obs:\hamloc\to H^1_\rho(\G,A)$, $x\mapsto[i_x\omega]$, is a
surjective morphism of Lie algebras onto the abelian Lie algebra
$H^1_\rho(\G,A)$, with kernel $\ham$. Hence there is an exact sequence of
Lie algebras
\[
0\longrightarrow\ham\longrightarrow\hamloc\xrightarrow{\ \Obs\ }
H^1_\rho(\G,A)\longrightarrow0,
\]
and $\Obs(x)$ is exactly the obstruction to the strict lifting of $x$.
\end{theorem}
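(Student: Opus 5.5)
The proof rests on the description \eqref{hamZB} of $\hamloc$ and $\ham$ as preimages of cocycles and coboundaries under $\omega^\flat$. First, $\Obs$ is well defined: for $x\in\hamloc$ one has $d_\rho(i_x\omega)=\Lie^\rho_x\omega=0$ (using $d_\rho\omega=0$), so $i_x\omega\in Z^1_\rho$ and $[i_x\omega]\in H^1_\rho(\G,A)$ makes sense. The map is $K$-linear, since $\omega^\flat$ and the quotient map $Z^1_\rho\to H^1_\rho$ are both linear.

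For the kernel, $\Obs(x)=0$ means $i_x\omega=d_\rho f$ for some $f\in A$, that is, $x=X_f\in\ham$. Conversely, every $X_f$ lies in $\hamloc$ by Proposition~\ref{prop:okassa}(i), and $\Obs(X_f)=[d_\rho f]=0$. Hence $\Ker\Obs=\ham$.

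For surjectivity, take any $\beta\in Z^1_\rho(\G,A)$. Since $\omega^\flat:\G\to\G^*$ is an $A$-module isomorphism and $C^1_\rho=\G^*$, there is a unique $x\in\G$ with $i_x\omega=\beta$. Then $\Lie^\rho_x\omega=d_\rho\beta=0$, so $x\in\hamloc$ and $\Obs(x)=[\beta]$.

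To see that $\Obs$ is a Lie algebra morphism onto the abelian Lie algebra $H^1_\rho$, it suffices to check $\Obs([x,y])=0$ for $x,y\in\hamloc$. This is Proposition~\ref{prop:hamloc}(i): $i_{[x,y]}\omega=d_\rho(\omega(y,x))$ is a coboundary. The same proposition gives that $\hamloc$ is a Lie algebra and $\ham$ an ideal, so the sequence is one of Lie algebras; exactness at $\ham$ is just the inclusion.

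The final assertion, that $\Obs(x)$ is exactly the obstruction to strict lifting, is Theorem~\ref{thm:obstruction}: $x\in\hamloc$ admits a strict lift if and only if $x\in\ham$, which by the kernel computation is equivalent to $\Obs(x)=0$.

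There is no genuine obstacle. The only points needing care are that surjectivity uses the full bijectivity of $\omega^\flat$ onto $\G^*=C^1_\rho$, not merely injectivity, and that the morphism property relies on Proposition~\ref{prop:hamloc}(i).
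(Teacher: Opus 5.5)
Your proposal is correct and follows essentially the same route as the paper. Surjectivity and $\Ker\Obs=\ham$ come from the bijection $\hamloc\to Z^1_\rho$ induced by $\omega^\flat$ via \eqref{hamZB}, the morphism property from Proposition~\ref{prop:hamloc}(i), and the final assertion from Theorem~\ref{thm:obstruction}; you only spell out well-definedness and the identification $(\omega^\flat)^{-1}(B^1_\rho)=\ham$ in more detail than the paper does.
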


\begin{proof}
By \eqref{hamZB}, $\omega^\flat$ induces a bijection $\hamloc\to Z^1_\rho$,
whence surjectivity, and $\Ker\Obs=(\omega^\flat)^{-1}(B^1_\rho)=\ham$. By
Proposition~\ref{prop:hamloc}(i), $\Obs([x,y])=[d_\rho\omega(y,x)]=0$, so
$\Obs$ is a morphism into the abelian Lie algebra $H^1_\rho$. The last
statement is Theorem~\ref{thm:obstruction}.
\end{proof}

\section{Symplectic curl and contact curvature}\label{sec:curl}

\begin{proposition}\label{prop:curlcontact}
For $u=x+fe\in\Gh_\omega$,
\[
\Lie^{\hr}_u(d_{\hr}\eta)=-\pi^*\bigl(\curl_\omega(x)\bigr).
\]
In particular, $u$ preserves $d_{\hr}\eta$ if and only if $x\in\hamloc$, and
this condition does not depend on $f$.
\end{proposition}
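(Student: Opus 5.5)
The plan is to reduce everything to the lifting formula of Lemma~\ref{lem:lift} and the Cartan identity $[\Lie^{\hr}_u,d_{\hr}]=0$ from Lemma~\ref{lem:cartan}, applied in the LRJ algebra $(\Gh_\omega,\hr)$ provided by Theorem~\ref{thm:extension}.

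The first step is the commutation itself. Since $d_{\hr}^2=0$, we have $\Lie^{\hr}_u d_{\hr}=d_{\hr}i_ud_{\hr}=d_{\hr}\Lie^{\hr}_u$. Therefore
\[
\Lie^{\hr}_u(d_{\hr}\eta)=d_{\hr}\bigl(\Lie^{\hr}_u\eta\bigr)
=d_{\hr}\pi^*\bigl(d_\rho f-i_x\omega\bigr),
\]
where the second equality is \eqref{Leta}.

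The second step moves $d_{\hr}$ past $\pi^*$. By Theorem~\ref{thm:extension}, $\pi$ is a morphism of LRJ algebras, so Lemma~\ref{lem:pullback} gives $d_{\hr}\pi^*=\pi^*d_\rho$. The right-hand side then becomes
\[
\pi^*\bigl(d_\rho d_\rho f-d_\rho i_x\omega\bigr)=-\pi^*\bigl(d_\rho i_x\omega\bigr).
\]
Since $d_\rho\omega=0$, we have $d_\rho i_x\omega=\Lie^\rho_x\omega=\curl_\omega(x)$, which is the stated identity. As a cross-check, one can compute directly:
\[
\Lie^{\hr}_u(-\pi^*\omega)=-i_u\pi^*d_\rho\omega-d_{\hr}i_u\pi^*\omega=-d_{\hr}\pi^*(i_x\omega),
\]
using Proposition~\ref{prop:deta} and $i_u\pi^*=\pi^*i_{\pi(u)}$ from Lemma~\ref{lem:pullback}. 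This yields the same result without passing through $\eta$.

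For the ``in particular'' statement, $\pi$ is surjective, so $\pi^*$ is injective on forms of every degree. Indeed, if $\pi^*\beta=0$, evaluating on elements of $\G\subset\Gh_\omega$ gives $\beta=0$. Hence $\Lie^{\hr}_u d_{\hr}\eta=0$ if and only if $\curl_\omega(x)=0$, that is, $x\in\hamloc$. The right-hand side involves only $x$, so the condition does not depend on $f$.

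I expect no genuine obstacle; the argument is a short formal computation. The only point requiring care is that Lemma~\ref{lem:pullback} applies, i.e.\ that $\pi$ is an LRJ morphism, which is part of Theorem~\ref{thm:extension}.
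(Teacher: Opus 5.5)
Your proposal is correct and follows the paper's proof essentially verbatim: commute $\Lie^{\hr}_u$ with $d_{\hr}$, apply the lifting formula \eqref{Leta}, pass $d_{\hr}$ through $\pi^*$ via Lemma~\ref{lem:pullback}, use $d_\rho^2=0$ and $d_\rho\omega=0$, and conclude by injectivity of $\pi^*$. Your cross-check through $d_{\hr}\eta=-\pi^*\omega$, which bypasses \eqref{Leta}, is a valid minor variant.
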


\begin{proof}
By Lemmas~\ref{lem:cartan}, \ref{lem:pullback} and \ref{lem:lift},
$\Lie^{\hr}_u d_{\hr}\eta=d_{\hr}\Lie^{\hr}_u\eta
=\pi^*d_\rho(d_\rho f-i_x\omega)=-\pi^*d_\rho(i_x\omega)
=-\pi^*\Lie^\rho_x\omega$. We conclude by injectivity of $\pi^*$.
\end{proof}

\begin{corollary}
If $x\in\hamloc$, every lift $x+fe$ preserves $d_{\hr}\eta$, and it
preserves $\eta$ if and only if $d_\rho f=i_x\omega$.
\end{corollary}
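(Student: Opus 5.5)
The corollary follows directly from Proposition~\ref{prop:curlcontact} and Lemma~\ref{lem:lift}, so the plan is to combine them with no new computation.

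For the first assertion, fix $x\in\hamloc$ and an arbitrary $f\in A$, and set $u=x+fe$. By definition of $\hamloc$ we have $\curl_\omega(x)=\Lie^\rho_x\omega=0$. Proposition~\ref{prop:curlcontact} then gives
\[
\Lie^{\hr}_u(d_{\hr}\eta)=-\pi^*\bigl(\curl_\omega(x)\bigr)=0,
\]
so $u$ preserves $d_{\hr}\eta$. This holds independently of $f$, as that proposition already notes.

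For the second assertion, Lemma~\ref{lem:lift} gives
\[
\Lie^{\hr}_{x+fe}\eta=\pi^*\bigl(d_\rho f-i_x\omega\bigr).
\]
The projection $\pi$ is surjective by Theorem~\ref{thm:extension}, so $\pi^*$ is injective, exactly as in the proof of Theorem~\ref{thm:strict}. Hence $\Lie^{\hr}_{x+fe}\eta=0$ if and only if $d_\rho f=i_x\omega$.

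The only point needing care, though it is hardly an obstacle, is the injectivity of $\pi^*$ on forms of degree one: if $\beta(\pi(u))=0$ for all $u$, then $\beta=0$ because $\pi$ is onto. Finally, by Theorem~\ref{thm:obstruction} such an $f$ exists precisely when $\Obs(x)=0$.
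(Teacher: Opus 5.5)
Your proposal is correct and follows the paper's own (implicit) argument: the first claim is read off Proposition~\ref{prop:curlcontact} with $\curl_\omega(x)=0$, and the second comes from the lifting formula \eqref{Leta} together with the injectivity of $\pi^*$, exactly as in Theorem~\ref{thm:strict}. Your closing remark linking the existence of such an $f$ to $\Obs(x)=0$ via Theorem~\ref{thm:obstruction} is also consistent with the paper.
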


\begin{proposition}\label{prop:conformal}
If $u\in\Gh_\omega$ and $h\in A$ satisfy $\Lie^{\hr}_u\eta=h\eta$, then
$h=0$.
\end{proposition}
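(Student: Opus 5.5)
The plan is to evaluate both sides of the assumed identity $\Lie^{\hr}_u\eta=h\eta$ on the generator $e$ of the vertical summand. The lifting formula (Lemma~\ref{lem:lift}) shows the left-hand side is basic, and a basic form vanishes on $e$. The right-hand side, evaluated at $e$, gives $h\eta(e)=h$.

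First, I write $u=x+fe$ and invoke Lemma~\ref{lem:lift}:
\[
\Lie^{\hr}_u\eta=\pi^*\bigl(d_\rho f-i_x\omega\bigr).
\]
For any $\beta\in C^1_\rho(\G,A)$, the pullback satisfies $(\pi^*\beta)(e)=\beta(\pi(e))=\beta(0)=0$. Hence $(\Lie^{\hr}_u\eta)(e)=0$.

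Second, by \eqref{hatrho}, $\eta(e)=1$, so $(h\eta)(e)=h$. Comparing the two evaluations at $e$ gives $h=0$.

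As a consequence, the hypothesis reduces to $\Lie^{\hr}_u\eta=0$, so $u$ is a strict symmetry. By Theorem~\ref{thm:strict}, this means $x=X_f$. I would record this, since it shows that conformal symmetries of $\eta$ in this sense are automatically strict.

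I do not expect any real obstacle. The only point needing care is that the pullback $\pi^*$ is a well-defined $A$-multilinear form on $\Gh_\omega$ that vanishes on the kernel of $\pi$, and this is immediate from its definition.
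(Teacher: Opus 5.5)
Your proof is correct and matches the paper's: both evaluate the identity on $e$, using the lifting formula \eqref{Leta} to get $(\Lie^{\hr}_u\eta)(e)=0$ while $(h\eta)(e)=h$. Your extra remark is also correct: such a $u$ is then a strict symmetry, hence of the form $X_f+fe$ by Theorem~\ref{thm:strict}.
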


\begin{proof}
By \eqref{Leta}, $(\Lie^{\hr}_u\eta)(e)=0$, whereas $(h\eta)(e)=h$.
\end{proof}

\begin{remark}
This rigidity comes from $\hr(e)=0$: the admissible conformal factors are
elements of $A$, that is, ``basic functions''. In the classical model
$(M\times\mathbb R,dt-\theta)$, this corresponds to the fact that contact
vector fields invariant under translation in $t$ are strict.
\end{remark}

\section{Examples}\label{sec:exemples}

In Examples~\ref{ex:affine}--\ref{ex:jacobi}, $A$ is a smooth algebra
whose module $\Der_K(A)$ is free
with basis $(\partial_{x_i})$; we then identify
$C^\bullet_\rho(\Der_KA,A)$ with the module of Kähler forms
$\Omega^\bullet_A$, and $d_\rho$ with $d+\chi\wedge$.

\begin{example}[The affine symplectic case]\label{ex:affine}
Let $A=K[p_1,\ldots,p_m,q_1,\ldots,q_m]$, $\G=\Der_K(A)$, $\rho=\id$ and
$\omega=\sum_idp_i\wedge dq_i$. With our convention,
\[
X_f=\sum_i\Bigl(\frac{\partial f}{\partial q_i}\partial_{p_i}
-\frac{\partial f}{\partial p_i}\partial_{q_i}\Bigr),\qquad
\{p_i,q_j\}=-\delta_{ij},
\]
and $\mathcal J(p_j)=-\partial_{q_j}+p_je$, $\mathcal J(q_j)=\partial_{p_j}+q_je$,
so that $[\mathcal J(p_i),\mathcal J(q_j)]_\omega=-\delta_{ij}e$: one
recovers the Heisenberg algebra. By the Poincaré lemma, $H^1_\rho=0$ and
$H^0_\rho=K$; thus $\hamloc=\ham$ and $\Cont$ is the central extension of
$\ham$ by $Ke$.
\end{example}

\begin{example}[Algebraic torus: nonzero obstruction]\label{ex:tore}
Let $A=K[t^{\pm1},s^{\pm1}]$, $\G=\Der_K(A)$, $\rho=\id$ and
$\omega=(ts)^{-1}dt\wedge ds$, which is closed and nondegenerate. We have
$H^1_\rho(\G,A)=K\,[t^{-1}dt]\oplus K\,[s^{-1}ds]$. The elements
\[
x_1=-s\partial_s,\qquad x_2=t\partial_t
\]
satisfy $i_{x_1}\omega=t^{-1}dt$ and $i_{x_2}\omega=s^{-1}ds$; they are
locally Hamiltonian, $\Obs(x_1)$ and $\Obs(x_2)$ form a basis of
$H^1_\rho$, and no lift $x_k+fe$ preserves $\eta$. Here
$\hamloc/\ham\cong K^2$. One also checks that $[x_1,x_2]=0$, in agreement
with Proposition~\ref{prop:hamloc}(i) since $\omega(x_2,x_1)=-1$ is
constant.
\end{example}

\begin{example}[A conformally trivial case]\label{ex:trivial}
Let $A=K[t^{\pm1},s,p,q]$, $\G=\Der_K(A)$, $\chi=t^{-1}dt$,
$\rho(X)(f)=X(f)+\chi(X)f$ and
$\omega=t^{-1}(dt\wedge ds+dp\wedge dq)$. Then $d_\rho\omega=0$, $\omega$ is
nondegenerate, $[\partial_t,e]_\omega=t^{-1}e$ and $X_{1}=-\partial_s$. But
$\chi=t^{-1}dt$ is the logarithmic derivative of the unit $t$: by
Proposition~\ref{prop:conforme}, $\rho=L_{t^{-1}}\circ\id\circ L_t$, the form
$\Omega=t\omega=dt\wedge ds+dp\wedge dq$ is the standard symplectic form,
$X_1=X^\Omega_t$, and $a\mapsto ta$ identifies the Jacobi algebra of
$\omega$ with the Poisson algebra of $\Omega$. Moreover $t^{-1}e$ commutes
with $\G$.
\end{example}

\begin{example}[A genuinely Jacobi case]\label{ex:jacobi}
Let
\[
A=K[t^{\pm1},s,p,q],\qquad \G=\Der_K(A),
\]
where $K$ is a field of characteristic zero, and choose
$c\in K\setminus(\mathbb Z\cdot1_K)$. Set
\[
\chi=c\,t^{-1}dt,\qquad\rho(X)(f)=X(f)+\chi(X)f .
\]
Since $d\chi=0$, $(\G,\rho)$ is an LRJ algebra. Since $K$ is a field, the
units of $A$ are precisely the elements $\lambda t^n$, with
$\lambda\in K^\times$ and $n\in\mathbb Z$. Their logarithmic derivatives are
\[
(\lambda t^n)^{-1}d(\lambda t^n)=n\,t^{-1}dt.
\]
Therefore $\chi=c\,t^{-1}dt$ is not conformally trivial because
$c\notin\mathbb Z\cdot1_K$ (Remark~\ref{rem:conforme}). Let
$\theta=ds+p\,dq$ and
\[
\omega=d_\rho\theta=c\,t^{-1}dt\wedge ds+c\,p\,t^{-1}dt\wedge dq
+dp\wedge dq .
\]
The Pfaffian of $\omega$ in the basis
$(\partial_t,\partial_s,\partial_p,\partial_q)$ equals $c\,t^{-1}$, which is a
unit: $\omega$ is symplectic. One finds
\begin{gather*}
X_1=-\partial_s,\qquad X_p=-\partial_q,\qquad X_q=\partial_p-q\,\partial_s,\\
X_s=c^{-1}t\,\partial_t-s\,\partial_s-p\,\partial_p,\qquad
X_t=-(1+c^{-1})\,t\,\partial_s,
\end{gather*}
hence, for instance,
\[
\{s,1\}=1,\qquad\{p,q\}=-1,\qquad\{s,q\}=q,\qquad
\{t,s\}=-(1+c^{-1})\,t .
\]
Since $\{s,1\}\neq0$, the bracket is not Poisson, and
$[\mathcal J(s),\mathcal J(1)]_\omega=\mathcal J(1)$ with
$\mathcal J(1)=-\partial_s+e$.

\emph{Cohomology.} We have
$A=K[t^{\pm1}]\otimes_K K[s,p,q]$, and the complex
$(\Omega^\bullet_A,d+\chi\wedge)$ is the tensor product of
\[
\bigl(\Omega^\bullet_{K[t^{\pm1}]},d+c\,t^{-1}dt\wedge\bigr)
\]
with the algebraic de Rham complex of $K[s,p,q]$. In degree zero, the
twisted differential on the Laurent polynomial factor is given by
\[
t^k\longmapsto(k+c)\,t^{k-1}dt,\qquad k\in\mathbb Z.
\]
Since $c\notin\mathbb Z\cdot1_K$, one has $k+c\neq0$ in $K$ for every
$k\in\mathbb Z$. As $K$ is a field, each $k+c$ is invertible. Hence this
map is bijective, and the twisted de Rham complex of $K[t^{\pm1}]$ is
acyclic. The algebraic de Rham complex of $K[s,p,q]$ has cohomology $K$ in
degree zero and vanishes in positive degrees. Therefore, by the Künneth
formula,
\[
H^\bullet_\rho(\G,A)=0.
\]
In particular, $H^0_\rho=H^1_\rho=H^2_\rho=0$. Consequently, every
Hamiltonian element has a unique strict lift, $\pi:\Cont\to\ham$ is an
isomorphism, $\hamloc=\ham$, and $\omega$ is exact, as is also seen
directly from $\omega=d_\rho\theta$. Moreover,
$\Gh_\omega\cong\G\ltimes A$ by Proposition~\ref{prop:classe}, with
$\eta$ corresponding to $\eta_0-\pi^*\theta$.
\end{example}

In Example~\ref{ex:tore} the obstruction is nonzero but $\chi=0$, whereas
in Example~\ref{ex:jacobi} the cocycle $\chi$ is not conformally trivial but
all the cohomology vanishes. The following smooth example shows that both
phenomena can occur simultaneously.

\begin{example}[Surfaces of genus $g\geq2$]\label{ex:surface}
Let $\Sigma$ be a closed connected oriented surface of genus $g\geq2$,
$K=\mathbb R$, $A=C^\infty(\Sigma)$ and $\G=\mathfrak X(\Sigma)$, which is a
finitely generated projective $A$-module of constant rank $2$ by the
Serre--Swan theorem. Choose a closed $1$-form $\chi$ whose de Rham class
$[\chi]\in H^1_{dR}(\Sigma)$ is nonzero, and set
$\rho(X)(f)=X(f)+\chi(X)f$. As in Remark~\ref{rem:lcs}, $(\G,\rho)$ is an
LRJ algebra, $C^\bullet_\rho(\G,A)=\Omega^\bullet(\Sigma)$ and
$d_\rho=d_\chi:=d+\chi\wedge$.

\emph{Symplectic structure.} Let $\omega$ be any area form on $\Sigma$.
Since $\Omega^3(\Sigma)=0$, we have $d_\rho\omega=0$, and $\omega^\flat$ is
bijective because $\omega$ is pointwise nondegenerate. Thus
$(\G,\rho,\omega)$ is a symplectic LRJ algebra, and by
Proposition~\ref{prop:contactform}, $\eta$ is a contact form on the rank-$3$
module $\Gh_\omega$, with Reeb element $e$.

\emph{Absence of conformal triviality.} A unit $u\in A^\times$ is a
nowhere vanishing function, and $u^{-1}d_{\bar\rho}u=u^{-1}du=d\log|u|$ is
exact. Since $[\chi]\neq0$, the cocycle $\chi$ is not conformally trivial.
Moreover $\{\,,\}$ is not a Poisson bracket. Indeed, choose $p\in\Sigma$
with $\chi_p\neq0$ and $\xi\in T^*_p\Sigma$ such that
$\chi_p\bigl((\omega_p^\flat)^{-1}\xi\bigr)\neq0$, and let $a\in A$ satisfy
$a(p)=0$ and $da_p=\xi$. Then $(d_\chi a)_p=\xi$, hence
$X_a(p)=(\omega_p^\flat)^{-1}\xi$, and
$\{a,1_A\}(p)=\chi(X_a)(p)\neq0$.

\emph{Cohomology in degree zero.} Let $f\in A$ with $df+f\chi=0$. On a
contractible open set $U$ write $\chi|_U=dh_U$; then $d(e^{h_U}f)=0$ on
$U$, so $f$ vanishes either identically or nowhere on $U$. Since $\Sigma$
is connected, either $f=0$ or $f$ vanishes nowhere; in the latter case
$\chi=-d\log|f|$ would be exact. Hence $H^0_\rho=0$. The same argument,
applied to $-\chi$, gives $H^0_{-\chi}=0$.

\emph{Cohomology in degree two.} The operator $d_\chi-d=\chi\wedge$ has
order zero, so $(\Omega^\bullet(\Sigma),d_\chi)$ is an elliptic complex with
the same principal symbol as the de Rham complex. Consequently its
cohomology is finite dimensional, Hodge theory applies for any Riemannian
metric, and its Euler characteristic equals that of the de Rham complex:
\begin{equation}\label{euler}
\dim H^0_\rho-\dim H^1_\rho+\dim H^2_\rho=2-2g .
\end{equation}
Equivalently, $H^\bullet_\rho$ is the cohomology of $\Sigma$ with
coefficients in a real local system of rank one, whose Euler characteristic
is that of $\Sigma$. Choose a metric whose area form is $\omega$; every
$2$-form can be written $\varphi\,\omega$ with $\varphi\in A$. By elliptic Hodge theory, every cohomology class has a unique harmonic
representative. In degree two, since there is no outgoing differential,
the harmonic representatives are precisely the $2$-forms orthogonal to
$\operatorname{im}(d_\chi:\Omega^1(\Sigma)\to\Omega^2(\Sigma))$.
Thus $H^2_\rho$ is naturally identified with this orthogonal complement.
For $\alpha\in\Omega^1$ and $f\in A$, Stokes' theorem gives
\[
\int_\Sigma f\,d_\chi\alpha=\int_\Sigma\bigl(f\,d\alpha+f\chi\wedge\alpha\bigr)
=-\int_\Sigma(df-f\chi)\wedge\alpha
=-\int_\Sigma(d_{-\chi}f)\wedge\alpha .
\]
Hence $f\omega$ is orthogonal to $d_\chi\Omega^1$ if and only if
$d_{-\chi}f=0$, that is, $f\in H^0_{-\chi}=0$. Therefore $H^2_\rho=0$, and
\eqref{euler} yields
\[
\dim H^1_\rho(\G,A)=2g-2>0 .
\]

\emph{Consequences.} Since $H^0_\rho=0$, every Hamiltonian element has a
unique strict lift, and by Theorem~\ref{thm:jaclift} and
Corollary~\ref{cor:central} the maps
\[
(A,\{\,,\})\xrightarrow{\ \mathcal J\ }\Cont(\Gh_\omega,\eta)
\xrightarrow{\ \pi\ }\ham
\]
are isomorphisms of Lie algebras. By Theorem~\ref{thm:exact},
\[
\hamloc/\ham\cong H^1_\rho(\G,A)\cong\mathbb R^{2g-2}:
\]
there are locally Hamiltonian vector fields $X$, that is, fields with
$\Lie_X\omega=-\chi(X)\,\omega$, none of whose lifts $X+fe$ preserves
$\eta$, although all of them preserve $d_{\hr}\eta$
(Proposition~\ref{prop:curlcontact}). Explicitly, for every
$d_\chi$-closed $1$-form $\alpha$ that is not $d_\chi$-exact, the element
$X=(\omega^\flat)^{-1}(\alpha)$ is such a field. Finally, $H^2_\rho=0$
implies $\omega=d_\chi\theta$ for some $\theta\in\Omega^1(\Sigma)$, so
$\Gh_\omega\cong\G\ltimes A$ by Proposition~\ref{prop:classe}; as in
Example~\ref{ex:jacobi}, the information distinguishing the strict
symmetries is carried by the canonical form $\eta$ and not by the
isomorphism class of the extension.
\end{example}

\begin{remark}
The genus assumption $g\geq2$ is precisely what ensures a nonzero
obstruction space in this construction. For $g=0$, every
closed $1$-form is exact. For $g=1$ and $[\chi]\neq0$, the same argument
gives $H^0_\rho=H^2_\rho=0$, and \eqref{euler} then forces $H^1_\rho=0$.
If $\chi$ is exact, say $\chi=du/u$ with $u>0$, then $\chi$ is conformally
trivial and, by Proposition~\ref{prop:conforme}(ii),
$H^\bullet_\rho\cong H^\bullet_{dR}(\Sigma)$, so that
$\hamloc/\ham\cong\mathbb R^{2g}$, but the structure then reduces to the
symplectic case.
\end{remark}

\section{Concluding remarks}

\subsection*{Comparison with Okassa's construction}
Okassa shows that the anchors of generalized Lie--Rinehart structures on
$\mathfrak X(M)$ are of the form $X\mapsto X+\alpha(X)$ with $\alpha$ closed
\cite[Props.~11.33--11.34]{OkassaBook}, and that a generalized symplectic
structure on the module $\mathcal D(M)=C^\infty(M)\oplus\mathfrak X(M)$, of
rank $1+\dim M$, characterizes a contact structure on an odd-dimensional
manifold $M$ \cite[Ch.~12]{OkassaBook},\cite{Okassa2011}. That
construction goes from contact to symplectic (rank $2m+2$ when
$\dim M=2m+1$); the contactification studied here goes in the opposite
direction, from symplectic of rank $2m$ to contact of rank $2m+1$.

Beyond the infinitesimal picture developed here, a natural next step is
to study ``integral'' classes $[\omega]\in H^2_\rho(\G,A)$ in settings
where an appropriate notion of line bundle, or rank-one module with
connection, is available. This should make it possible to compare the
strict-lifting obstruction obtained here with a genuinely global
prequantization theory, in the spirit of \cite{Huebschmann1990} and
\cite{LeonMarreroPadron1997}. It would also be useful to investigate the
behaviour of these constructions under conformal changes and, when an
integration theory is available, their relation with Jacobi or contact
groupoids.

\subsection*{Acknowledgements}
The authors pay tribute to the late Professor Eugène Okassa, whose
pioneering work on Lie--Rinehart--Jacobi algebras and their symplectic and
contact geometry provides the mathematical foundation for the present
study. His ideas and scientific legacy have been a continuing source of
inspiration for our work. We respectfully dedicate this article to his
memory.

\end{document}